\newcounter{tbox}
\newtheorem{theorem}{Theorem}[section]
\newtheorem{lemma}[theorem]{Lemma}
\newtheorem{corollary}[theorem]{Corollary}
\newtheorem*{lemma*}{Lemma}
\title{Rooted $C_5$-Minors}
\author{Xiying Du\footnote{School of Mathematics,
        Georgia Institute of Technology}, Yanjia Li\footnote{School of Mathematics,
        Georgia Institute of Technology},  Xingxing Yu\footnote{School of Mathematics,
        Georgia Institute of Technology; partially supported by NSF grant DMS--2348702}}
\begin{document}

	\maketitle

\begin{abstract}
Let $G$ be a graph and $x_1, x_2, \ldots, x_k$ be distinct vertices of $G$. We say $(G,x_1x_2\ldots x_k)$ has a $C_k$-minor or $G$ has a $C_k$-minor rooted at $x_1x_2\ldots x_k$, if there exist pairwise disjoint sets $X_1, X_2, \ldots, X_k\subseteq V(G)$, such that for all $i\in [k]$, $G[X_i]$ is connected, $x_i\in X_i$,  and $G$ has an edge between $X_i$ and $X_{i+1}$, where $X_{k+1}=X_k$. When $k=3$ it is easy to determine when $(G,x_1x_2x_3)$ contains a $C_3$-minor. For $k=4$, Robertson, Seymour and Thomas gave a characterization of $(G,x_1x_2x_3x_4)$ with no $C_4$-minor, which, in particular, implies that such $G$ has connectivity at most 5. In this paper, we apply a method of Thomas and Wollan to prove a result, which implies that if $G$ is $10$-connected then,  for all distinct vertices $x_1,x_2,x_3,x_4,x_5$ of $G$, $(G,x_1x_2x_3x_4x_5)$ has a $C_5$-minor.
\end{abstract}    

\newpage
    
\section{Introduction}
    For a graph $G$ and sets $X,Y\subseteq V(G)$,  $e_G(X,Y)$ denotes the number of edges of $G$ with one vertex in $X$ and one vertex in $Y$, $\rho_G(X)$ denotes the number of edges of $G$ with at least one vertex in $X$,  $G[X]$ denotes the subgraph of $G$ induced by $X$, and $N_G(X)$ denotes the neighborhood of $X$.  When $X=\{x\}$ and $Y=\{y\}$, we will write $N_G(x)$ for $N_G(\{x\})$ and $e_G(x,y)$ for $e_G(\{x\},\{y\})$.  When no confusion arises, we may drop the subscript $G$. 
    
    We say that a graph $G$ contains another graph $H$ as a \emph{minor}, if $H$ can be obtained from a subgraph of $G$  by contracting edges. Equivalently, $H$ is a minor of $G$ if there exist  pairwise disjoint vertex sets $X_u\subseteq V(G)$ for $u\in V(H)$, such that $G[X_u]$ is connected for all $u\in V(H)$, and $e_G(X_u,X_v)>0$ for all $uv\in E(H)$. There has been extensive work on structure of graphs with a fixed graph excluded as a minor, including those motivated by Hadwiger's conjecture (which states that if a graph $G$ does not contain $K_{t+1}$ as a minor then $\chi(G)\le t$), see \cite{Se16}.

    When studying structure of graphs with excluded minors, it is often helpful to know the whereabouts of smaller minors. Let $G$ and $H$ be graphs, and let   $X\subseteq V(G)$ with $|X|=|V(H)|$.  We say that $(G,X)$ has an \emph{$H$-minor} or $G$ has an \emph{$H$-minor rooted on $X$} if there exist pairwise disjoint sets $X_u$ for $u\in V(H)$ such that all $G[X_u]$ are connected and, for some bijection $\sigma: X\rightarrow V(H)$, $x\in X_{\sigma(x)}$ for every $x\in X$, and $e_G(X_{\sigma(u)},X_{\sigma(v)})>0$ for all $\sigma(u)\sigma(v)\in E(H)$. When $|X|\le 3$,  deciding whether $(G,X)$ contains an $H$-minor is straightforward.  For the case $|X|=4$, Robertson, Seymour and Thomas \cite{HadwigerK6} (also see Fabila-Monroy and Wood \cite{K4}) characterized those $(G,X)$ without rooted $K_4$-minor (as well as rooted $H$-minor for other 4-vertex graphs), on their way to prove the Hadwiger conjecture for $t=5$.  Wollan \cite{H-bound}  showed that for any graph $H$, there exists a positive integer $f(H)$ such that, if $G$ is an $f(H)$-connected graph then, for every $X\subseteq V(G)$ with $|X|=|V(H)|$, $(G,X)$ has an $H$-minor. 
    
    We are interested in the rooted $H$-minor problem for graphs $H$ with $|V(H)|\ge 5$.  In fact, we consider an \emph{ordered} version which is related to the linkage problem for graphs. Let $G$ and $H$ be graphs with $V(H)=\{v_1,\ldots, v_h\}$, and let $X=\{x_1,\ldots, x_h\}\subseteq V(G)$. We say that $(G,x_1x_2\ldots x_h)$ has an  $H$-minor if there exist pairwise disjoint sets $X_i\subseteq V(G)$, $i\in [h]$,  such that $G[X_i]$ is connected and $x_i\in X_i$ for all $i\in [h]$, and for some automorphism $\sigma$ of $H$, $e_G(X_i,X_j)>0$ for all $i,j\in [h]$ with $\sigma(v_i)\sigma(v_j)\in E(H)$.  When $H\cong kK_2$ (i.e., $h=2k$ and $E(H)=\{v_{2i-1}v_{2i}: i\in [k]\}$), $(G,x_1\ldots x_{2k})$ contains an $H$-minor is the same as $G$ contains $k$ vertex disjoint paths from $x_{2i-1}$ to $x_{2i}$, $i\in [k]$, respectively. (Such a collection of $k$ disjoint paths with ends specified is usually called a \emph{$k$-linkage}.)
     We say that a graph $G$ is \emph{$k$-linked} if, for every sequence $x_1x_2\ldots x_{2k}$ of $2k$ distinct vertices of $G$, $(G,x_1x_2\ldots x_{2k})$ has a $kK_2$-minor.   
    
     Larman and Mani \cite{klink-existLM} and Jung \cite{klink-existJung} proved independently that for each positive integer $k$ there exists a smallest integer $f(k)$ such that every $f(k)$-connected graph is $k$-linked. Their upper bound for $f(k)$ has been improved over the time, see \cite{klink-RS, klink-K, klink-T, klink-BT, klink-KKY}. The current best bound is $f(k)\le 10k$ by Thomas and Wollan \cite{klink-TW}, where they proved a stronger result using a relaxed notion of connectivity which should be useful for studying problems with connectivity constraints. (For example, Thomas and Wollan \cite{3link-TW} used the same method to show that $f(3)\le 10$.)  

     When $H=C_k$, a $k$-cycle, $(G,x_1\ldots x_k)$ has a $C_k$-minor if there exist pairwise disjoint sets $X_1, \ldots, X_k\subseteq V(G)$, such that for $i\in [k]$, $G[X_i]$ is connected, $x_i\in X_i$, and $e(X_i,X_{i+1})>0$ (where $X_{k+1}=X_1$). 
     For a graph $G$ and a subset $X\subseteq V(G)$, we say that $(G,X)$ is \emph{cycle-linked} if $|X|\ge 3$ and for every permutation $X'$ of  $X$, $(G,X')$ has a $C_{|X|}$-minor; or if $|X|\in [2]$ and $G$ contains a path between the vertices of $X$. For any positive integer $k$, we say that a graph $G$ is \emph{$C_k$-minor-linked} if $(G,X)$ is cycle-linked for all $X\subseteq V(G)$ with $|X|=k$. 
     Note that a straightforward application of the bound $f(k)\le 10k$ shows that $50$-connected graphs are $C_5$-minor-linked. In this paper, we improve 50 to 10, using the method of Thomas and Wollan \cite{klink-TW}.

     \begin{theorem}\label{thm:10-conn}
		 $10$-connected graphs are $C_5$-minor-linked.
	\end{theorem}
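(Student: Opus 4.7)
The plan is to argue by contradiction from a minimum counterexample, following the Thomas--Wollan framework of \cite{klink-TW}. Suppose some $10$-connected graph $G$ has distinct vertices $x_1,\ldots,x_5$ with no rooted $C_5$-minor on $x_1x_2\ldots x_5$. Since ``$10$-connected'' is not well-behaved under minor reductions, I would first replace it by a technically stronger hypothesis that combines a milder local connectivity condition near $\{x_1,\ldots,x_5\}$ with a lower bound on the number of edges --- the same kind of swap used by Thomas and Wollan to push their $k$-linkage bound down to $10k$. I would then take $G$ minimizing $|V(G)|+|E(G)|$ subject to this strengthened hypothesis and deduce the usual extremal consequences: every edge outside a small neighborhood of $\{x_1,\ldots,x_5\}$ is either deletable or contractible unless doing so creates a new small separator; every ``essential'' separator of size at most $9$ interacts with the roots in a prescribed way; and the non-root part of $G$ has minimum degree comparable to the connectivity threshold.

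With such a reduced $G$ in hand, the next step is to exhibit the branch sets $X_1,\ldots,X_5$. A rooted $C_5$-minor on $x_1,\ldots,x_5$ amounts to picking auxiliary ``attachment'' vertices $y_i,z_i$ for each $i\in[5]$, with $y_iz_{i+1}\in E(G)$, together with five pairwise disjoint connected subgraphs such that the $i$th one contains $\{x_i,y_i,z_i\}$. Because the vertices $y_i,z_i$ are not pre-specified (they can range over the neighborhoods of the $x_i$'s), this gives considerably more flexibility than a rigid $5$-linkage. I would exploit the high local degree around each $x_i$ together with the post-reduction density to select the $y_i,z_i$ so that the resulting ``five trees'' problem can be solved as an auxiliary linkage inside the $10$-connected host. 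The matching of ``$10$-connected'' with ``$k=5$'' mirrors the Thomas--Wollan threshold $10k$ and is what makes the constant $10$ natural here.

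The main obstacle, I expect, will be handling the cyclic coupling. In a standard $5$-linkage the five source/target pairs are independent, but a rooted $C_5$-minor forces the five attachment pairs $(y_i,z_i)$ to respect a cyclic interlocking: the choice at $x_i$ constrains the choice at $x_{i\pm 1}$. Consequently, the delicate case analysis concerns separators of size at most $9$ that split the cycle into two arcs of consecutive roots; one must show that in every such configuration either the shorter arc can be realized inside the smaller side of the separator, or else the larger side contains an appropriate substitute that can absorb it. Carrying this out while keeping track of edge counts in the contraction-critical minimum counterexample, rather than crudely re-invoking $f(k)\le 10k$ five times (which only gives the weaker threshold $50$), is where I expect the bulk of the technical work to lie.
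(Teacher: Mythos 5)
Your high-level framework is the right one and matches the paper: replace $10$-connectivity by a ``massed'' condition in the style of Thomas and Wollan, prove a strengthening for pairs $(G,X)$ with $|X|\le 5$, and argue from a minimum counterexample minimizing $|V(G)|+|E(G)|$. But the proposal stops exactly at the point where the real work begins, and the sketch you give of that work does not match what is actually needed.

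The concrete engine of the paper's proof is missing from your plan. After the standard reductions (no rigid small separation, no $K_5$, and every edge $uv$ off $X$ has a large common neighborhood $|N(u)\cap N(v)|\ge 5$, from looking at $G/uv$), the paper derives that there is a vertex $a\in V(G)\setminus X$ with $|N(a)|<10$, and then works inside the closed neighborhood $H=G[N[a]]$, a graph on at most $10$ vertices with large minimum degree, containing no $K_4$ in $H-a$. The decisive step is to show that \emph{this small graph $H$ is itself cycle-linked} for any choice of $\le 5$ roots in $V(H)\setminus\{a\}$ (Lemmas~\ref{3.1}--\ref{3.2}): this is a finite case analysis, and it is where the constant $10$ really enters (via $|N(a)|<10$, hence $|V(H)|\le 10$). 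Once $H$ is locally cycle-linked, Menger either routes five disjoint paths from $X$ to $H$, producing the $C_5$-minor, or gives a small rigid separation, contradicting the extremal choice. Your proposal instead speaks of choosing ``attachment vertices'' $y_i,z_i$ and solving a ``five trees'' linkage problem inside the $10$-connected host; that is a plausible-sounding reformulation, but you give no mechanism for selecting those attachments or for handling what you yourself flag as the hard part (the cyclic coupling), and nothing in the proposal would obviously beat the crude $f(5)\le 50$ bound. The paper sidesteps the cyclic coupling entirely by concentrating all the combinatorics into the bounded gadget $H$.

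Two further mismatches. First, your separator threshold is off: you say the delicate case analysis concerns separators of size at most $9$ splitting the roots into two arcs, but with $|X|\le 5$ the massed condition (M2) only concerns separations of order $<5$, and the one extra lemma needed handles order exactly $5$ (Lemma~\ref{5cut}); separations of order up to $9$ never arise. Second, the strengthening that actually supports the induction is not just ``massed plus a density bound'' --- Theorem~\ref{main} also carries a precise description of the exceptional near-counterexample (two vertices $a,b$ adjacent to each other and to every $x_i$, with $\rho(V(G)\setminus X)=5|V(G)\setminus X|+1$ and components attached to $\{a,b,x_i,x_{i+2}\}$). Formulating and propagating this exceptional structure through contractions and separations is essential (see the use of Lemma~\ref{exception} and Lemma~\ref{5cut} inside Lemma~\ref{2.3}); your plan does not anticipate it, and without it the minimum-counterexample argument does not close.
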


	We will prove a stronger result from which we can derive Theorem~\ref{thm:10-conn}. To state and prove that result, we use the weaker version of connectivity introduced by Thomas and Wollan in \cite{klink-TW}.  
	Given a graph $G$ and a set $X\subseteq V(G)$, an ordered pair $(A,B)$ of subsets of $V(G)$ is a \emph{separation} of $(G,X)$ if $A\cup B=V(G)$, $X\subseteq A$, and $e(A\setminus B,B\setminus A)=0$; and we say that  $|A\cap B|$ is the \emph{order} of the separation $(A,B)$. 
	A separation $(A,B)$ of $(G,X)$ is \emph{rigid} if $B\setminus A\ne\emptyset$ and $(G[B], A\cap B)$ is cycle-linked. For a positive real number $\lambda$, we say that $(G,X)$ is \emph{$\lambda$-massed} if 
	\begin{itemize}
		\item[\textrm{(M1)}] \label{M1}  $\rho(V(G)\setminus X)> \lambda |V(G)\setminus X|$, and 
		\item[\textrm{(M2)}] \label{M2} $\rho(B\setminus A)\leq \lambda |B\setminus A|$ for every separation $(A,B)$ of $(G,X)$ of order less than $|X|$.
	\end{itemize}
    
	 For a graph $G$ and $S\subseteq V(G)$, $G-S$ denote the graph obtained from  $G$ by deleting vertices in $S$ and all edges incident with a vertex in $S$. The main result of this paper is the following.	
	 
	\begin{theorem} \label{main}
		Let $G$ be a graph and let $X\subseteq V(G)$, such that $|X|\le5$ and $(G,X)$ is $5$-massed. Suppose $(G,X)$ is not cycle-linked. Then $|X|=5$, $\rho(V(G)\setminus X)=5|V(G)\setminus X|+1$,  the vertices in $X$ may be labeled as $x_1,\ldots, x_5$ such that $x_ix_{i+1}\notin E(G)$ for $i\in [5]$ (with $x_6=x_1$), and        there exist $a,b\in V(G)\setminus X$ such that
        \begin{itemize}
            \item [(1)] $ab\in E(G)$ and $a,b\in N(x_i)$ for all $i\in [5]$, and
            \item [(2)] if $C$ is a component of $G-(X\cup \{a,b\})$ then $\rho_G(V(C))=5|V(C)|$ and $N_G(V(C))\subseteq \{a,b,x_i,x_{i+2}\}$ for some $i\in [5]$ (with $x_6=x_1$ and $x_7=x_2$). 

        \end{itemize}
	\end{theorem}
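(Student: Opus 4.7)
The plan is to proceed by induction, examining a minimum counterexample $(G,X)$ to Theorem~\ref{main}, minimizing $|V(G)|+|E(G)|$. I would first eliminate the cases $|X|\le 4$: for $|X|\le 2$ cycle-linkedness is just a connectivity condition that (M1) and the absence of low-order separations from (M2) quickly enforce; for $|X|=3$, the $C_3$-minor requirement is symmetric in the ordering and reduces to there being three disjoint connected pieces covering $X$ with pairwise adjacencies, which again follows from the mass condition; and for $|X|=4$, the Robertson--Seymour--Thomas classification of $(G,X)$ with no rooted $C_4$-minor lists only obstructions of connectivity at most $5$, each of which admits a separation contradicting (M2) once (M1) is used. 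This forces $|X|=5$ in any counterexample.

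With $|X|=5$, the next step is to extract standard reductions from minimality. If $(A,B)$ is a rigid separation of $(G,X)$ of order $<5$, one can replace the interior of $B$ by a suitable cycle-linking structure on $A\cap B$ to obtain a smaller counterexample $(G',X)$ on the reduced vertex set, still $5$-massed by (M2) applied to $(A,B)$. Similarly, contracting any edge $uv$ with $u\in V(G)\setminus X$ yields $(G/uv,X)$; if this remains $5$-massed, then by minimality it is cycle-linked, and any rooted $C_5$-minor there lifts to one in $G$. Hence every edge contraction must break (M1) or (M2), and likewise every vertex $v\in V(G)\setminus X$ must have degree at least $11$ (otherwise deleting $v$ gives a smaller $5$-massed counterexample). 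These reductions severely restrict the local structure of $G$.

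The heart of the argument is then to produce the distinguished pair $a,b$. Using the contraction constraints, together with a careful pigeonhole based on the tight count $\rho(V(G)\setminus X)\le 5|V(G)\setminus X|+O(1)$ coming from (M1), I would locate a vertex $a\in V(G)\setminus X$ adjacent to all of $X$; iterating the contraction argument along an edge at $a$ locates a second vertex $b$, adjacent to $a$ and to $X$. Once $a,b$ are in hand, the non-edge pattern on $X$ is forced: any edge $x_ix_j$ with $i,j$ meant to be consecutive in the $C_5$ labeling would combine with $a,b$ to complete a $C_5$-minor for any cyclic ordering, so the missing edges of $G[X]$ must form the pentagon $x_1x_2x_3x_4x_5x_1$ claimed in the statement. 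Finally, for each component $C$ of $G-(X\cup\{a,b\})$, I would apply (M2) to the separation $(V(G)\setminus V(C),\,V(C)\cup N_G(V(C)))$: the equality $\rho_G(V(C))=5|V(C)|$ follows from the tightness $\rho(V(G)\setminus X)=5|V(G)\setminus X|+1$, and the constraint $N_G(V(C))\subseteq\{a,b,x_i,x_{i+2}\}$ is forced because a component attaching to two consecutive non-edge vertices $x_i,x_{i+1}$ would supply the missing routing for a rooted $C_5$-minor through $a,b$.

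The principal obstacle I foresee is the production of the distinguished pair $a,b$ and the exclusion of alternative ``near-counterexample'' configurations. The contraction/separation reductions must be run in the presence of many tight or almost-tight separations simultaneously, and keeping track of how these interact—so as to extract exactly the structure in (1) and (2) rather than some slightly different obstruction—is where the most careful bookkeeping will be required, mirroring the intricate structural analysis in the Thomas--Wollan treatment of the $3$-linkage problem.
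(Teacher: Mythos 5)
Your proposal correctly identifies the overall skeleton (minimum counterexample, exclusion of low-order rigid separations, edge-contraction lemmas) and these align with the paper's Lemmas~\ref{2.1} and~\ref{2.3}. But there is a concrete error at the pivot point of your argument, and your plan for the core structural step diverges from the paper's in a way that leaves a genuine gap.

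The error: you assert that in a minimal counterexample every vertex of $V(G)\setminus X$ has degree at least $11$, ``otherwise deleting $v$ gives a smaller $5$-massed counterexample.'' This does not follow. Deleting a vertex $v$ removes $\deg(v)$ edges and one vertex from $V(G)\setminus X$, so (M1) is preserved under deletion only when $\deg(v)\le 5$, not when $\deg(v)\le 10$; for $6\le\deg(v)\le 10$ the quantity $\rho(V(G)\setminus X)-5|V(G)\setminus X|$ can drop by up to $5$, which can kill (M1) since that quantity starts at least $1$. Worse, the claim is exactly backwards relative to what the argument needs: the paper's Lemma~\ref{2.4} shows that in a minimal counterexample there \emph{must exist} a vertex $a\in V(G)\setminus X$ with $|N(a)|<10$, and this low-degree vertex is the entire engine of Section~\ref{sec3}. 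The proof then sets $H=G[N[a]]$, shows (Lemmas~\ref{3.1},~\ref{3.2}) that $(H,X')$ is cycle-linked for any appropriate set of terminals $X'\subseteq V(H)\setminus\{a\}$, and applies Menger's theorem to route $X$ into $H$, producing a $C_{|X|}$-minor in $G$ and contradicting that $(G,X)$ was not cycle-linked. Your proposal has no analogue of this step: ``a careful pigeonhole based on the tight count'' to locate a universal neighbor $a$ of $X$, then ``iterating the contraction argument'' to find $b$, is not a proof and it is not how the paper finds the exceptional structure. In the paper, the pair $a,b$ in the conclusion arises only in the exceptional configuration; the minimal-counterexample analysis does not directly construct $a,b$ at all, but instead derives cycle-linkedness.

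Two smaller points. First, you propose to dispatch $|X|\le 4$ separately by invoking the Robertson--Seymour--Thomas classification of rooted $C_4$-minor obstructions. Besides the logical complication (cycle-linkedness for $|X|=4$ requires a $C_4$-minor for \emph{every} cyclic order, not just one, so the RST obstruction graph does not apply directly), this is unnecessary: the paper's minimality argument is run simultaneously over all $|X|\le 5$, and the final Menger-plus-$H$ step yields a $C_{|X|}$-minor regardless of $|X|$. Second, your reduction via ``replacing the interior of $B$ by a suitable cycle-linking structure'' is the right idea, but note the paper realizes this by adding a clique on $A\cap B$ rather than some auxiliary gadget, and the delicate part of Lemma~\ref{2.1} is verifying that the clique-completed $(G'[A],X)$ is still $5$-massed and that the exceptional structure on $G'[A]$ (if it occurs) transfers back to $G$---neither of which your sketch engages with.
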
    

\noindent\textit{Proof of Theorem \ref{thm:10-conn} assuming Theorem \ref{main}}. Let $G$ be a 10-connected graph and $X\subseteq V(G)$ with $|X|=5$.
Since $G$ is 10-connected, we have $\delta(G)\ge 10$; so $|E(G)|\ge 10|V(G)|/2=5|V(G)|$, and hence \[
\rho(V(G)\setminus X)=|E(G)|-|E(G[X])|\ge 5|V(G)|-10>5(|V(G)|-5)=5|V(G)\setminus X|;
\]
so $(G,X)$ satisfies \hyperref[M1]{(M1)}. Moreover, \hyperref[M2]{(M2)} holds vacuously as $(G,X)$ admits no separation of order less than 5. 
Suppose $(G,X)$ is not cycle-linked, then, by Theorem \ref{main}, there exists $a,b\in V(G)\setminus X$ and labeling $x_1,\ldots,x_5$ of vertices in $X$ such that (1) and (2) holds.
Note that  $G-(X\cup \{a,b\})$ has at least one component $C$ as $\delta(G)\ge 10$. By (2),  $|N_G(V(C))|\le 4$, a contradiction as $G$ is 10-connected. Thus, $(G,X)$ must be cycle-linked. \qed

\medskip
   
	 The proof of Theorem~\ref{main} is divided into two stages. In the first stage, we show that a minor minimal counterexample must contain a dense subgraph with at most 10 vertices. This is done in Section \ref{sec2}. In the second stage, we use such small dense subgraph to derive a contradiction by finding the desired $C_5$-minor, see Section \ref{sec3}. 
		
\section{Structure of minimal counterexamples}\label{sec2}  
	Suppose Theorem \ref{main} does not hold. Then there exists a graph $G$ and a set $X\subseteq V(G)$ such that:
	\begin{enumerate}
	\item[(a)] $|X|\le 5$, and $(G,X)$ is 5-massed, i.e., it satisfies (M1) and (M2);
        \item[(b)] $(G,X)$ is not cycle-linked;
        \item[(c)] the conclusion of Theorem~\ref{main} fails; 
		\item[(d)] subject to (a), (b), and (c), $|V(G)|+|E(G)|$ is minimal.
	\end{enumerate}

    We now proceed to show that $G$ contains a small dense subgraph. 
    
  \begin{lemma}\label{exception}
         $|X|<5$, or $\left|\bigcap_{x\in X} N(x)\right|\le 1$.
    \end{lemma}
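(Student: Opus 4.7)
Assume for contradiction that $|X|=5$ and that $\bigcap_{x\in X}N(x)$ contains two distinct vertices $a,b$. The plan is to show that this forces either $(G,X)$ to be cycle-linked (contradicting (b)) or the full structural conclusion of Theorem~\ref{main} to hold (contradicting (c)), a contradiction either way.

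The core is a routing observation. For any cyclic labeling $y_1\ldots y_5$ of $X$ and any index $i\in[5]$, consider the branch assignment $X_i=\{y_i,a\}$, $X_{i+2}=\{y_{i+2},b\}$, and $X_j=\{y_j\}$ otherwise (indices mod $5$). Because $a,b\in N(y_k)$ for every $k$, four of the five consecutive adjacencies of the cycle are realized automatically; only the pair $(X_{i+3},X_{i+4})$ can fail. That last adjacency is delivered whenever $y_{i+3}y_{i+4}\in E(G)$ or some component of $G-(X\cup\{a,b\})$ has both $y_{i+3}$ and $y_{i+4}$ in its neighborhood, since we may then absorb that component (together with an internal $y_{i+3}$--$y_{i+4}$ path) into, say, $X_{i+3}$. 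As $i$ ranges over $[5]$ the ``bad'' pair sweeps every consecutive pair of $y_1\ldots y_5$. Hence if $(G,y_1\ldots y_5)$ admits no $C_5$-minor, then for every $j\in[5]$ the edge $y_jy_{j+1}$ is absent from $G$ \emph{and} no single component of $G-(X\cup\{a,b\})$ meets both $y_j$ and $y_{j+1}$.

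Now fix a cyclic ordering $x_1\ldots x_5$ of $X$ witnessing failure of cycle-linkedness. The above yields (i) $x_jx_{j+1}\notin E(G)$ for all $j\in[5]$, and (ii) $N_G(V(C))\cap X$ is an independent set in the $5$-cycle $x_1x_2x_3x_4x_5x_1$ for every component $C$ of $G-(X\cup\{a,b\})$, hence of the form $\{x_i\}$ or $\{x_i,x_{i+2}\}$. This matches the adjacency structure demanded by conclusion~(2) of Theorem~\ref{main}. To finish the match I would argue $ab\in E(G)$, since otherwise a singleton branch among $X_{i+3},X_{i+4}$ can be enlarged along a path through some component of $G-(X\cup\{a,b\})$ adjacent to both $a$ and $b$, recovering the missing adjacency (using (M1) to ensure such auxiliary vertices exist). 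For the density equalities, apply (M2) to the separation $(V(G)\setminus V(C),\,V(C)\cup N_G(V(C)))$: its order is at most $|\{a,b,x_i,x_{i+2}\}|=4<|X|$, so $\rho_G(V(C))\le 5|V(C)|$; summing over components of $G-(X\cup\{a,b\})$ and comparing with (M1) pins down $\rho_G(V(C))=5|V(C)|$ and $\rho(V(G)\setminus X)=5|V(G)\setminus X|+1$.

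The main obstacle is the routing step --- carefully checking that in the obstruction case (ii) no cleverer branch assignment rescues the $C_5$-minor, in particular ones that let multiple components cover multiple missing adjacencies simultaneously, or that reroute through $a$ and $b$ via their neighbors outside $X\cup\{a,b\}$. A secondary difficulty is converting the inequalities from (M1) and (M2) into the exact equalities asserted in Theorem~\ref{main}; this is likely to require minimality (d) to rule out small local reductions (edge-deletion or contraction inside a component) that would strictly decrease $|V(G)|+|E(G)|$ while preserving (a)--(c).
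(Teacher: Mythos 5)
Your proposal is essentially correct and follows the same route as the paper's proof. The key step---the branch assignment $X_i=\{y_i,a\}$, $X_{i+2}=\{y_{i+2},b\}$, singletons elsewhere, with the single potentially failing adjacency $(X_{i+3},X_{i+4})$---is exactly the paper's argument (the paper phrases it as: for each $i$, $G-\{x_{i+2},x_{i+3},x_{i+4}\}$ has no $x_i$--$x_{i+1}$ path avoiding $a,b$, whence $x_ix_{i+1}\notin E(G)$ and no component of $G-(X\cup\{a,b\})$ attaches to both). The density counting against (M1), after (M2) bounds $\rho(V(C))\le 5|V(C)|$ for each component, is also the paper's argument.

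Two small points. First, your side argument that $ab\in E(G)$ by enlarging a singleton branch through a component adjacent to $a$ and $b$ doesn't work as stated: such a component doesn't deliver the missing $(X_{i+3},X_{i+4})$ adjacency, since what's needed there is a path between $x_{i+3}$ and $x_{i+4}$, not between $a$ and $b$. But you don't need it: the same counting that gives $\rho(V(C))=5|V(C)|$ also gives $e(\{a,b\},X)=10$ and $e(a,b)=1$, i.e. $ab\in E(G)$ and $a,b\in N(x_i)$ for all $i$, because $\rho(V(G)\setminus X)=\sum_C\rho(V(C))+e(\{a,b\},X)+e(a,b)\le 5|V(G)\setminus X|+1$ with equality forced by (M1). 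Second, the two "obstacles" you flag at the end are not actually obstacles: for this lemma you only need to derive the structure from the assumed nonexistence of a $C_5$-minor for one chosen ordering; you do not need to rule out cleverer assignments, and the equalities come purely from the (M1)/(M2) count, not from minimality (d).
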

    \begin{proof}
        For, suppose $|X|=5$ and let $a,b\in \bigcap_{x\in X}N(x)$ be distintct. Let $x_1,x_2,x_3,x_4,x_5$ be an ordering of vertices in $X$, such that 
        $(G,x_1x_2x_3x_4x_5)$ has no $C_5$-minor.
        
        Note that, for each $i\in[5]$, $G- \{x_{i+2},x_{i+3},x_{i+4}\}$ contains no $x_i$-$x_{i+1}$ path; since if $P$ is such a path then  $V(P-x_{i+1}), \{x_{i+1}\}, \{x_{i+2}, a\}, \{x_{i+3}\}, \{x_{i+4},b\}$ give a $C_5$-minor in $(G,x_1x_2x_3x_4x_5)$, a contradiction.
        Therefore, for all $i\in [5]$, $x_ix_{i+1}\notin E(G)$ and $\{x_i,x_{i+1}\}\not\subseteq N(C)$ for every
        component $C$ of $G-(X\cup \{a,b\})$. 

        Let $\mathcal{C}$ denote the collection of all components of $G-(X\cup \{a,b\})$. If ${\cal C}=\emptyset$ then $|V(G)|=7$ and, since $\rho(V(G)\setminus X)\ge5|V(G)\setminus X|+1$, we have $ab,ax_i,bx_i\in E(G)$ for $i\in [5]$; so   the conclusion of Theorem~\ref{main} holds, contradicting (c). So ${\cal C}\ne \emptyset$.  For each $C\in \mathcal{C}$,  since $\{x_i,x_{i+1}\}\not\subseteq N(C)$ for all $i\in [5]$ (where $x_6=x_1$),   $N(C)\subseteq \{a,b,x_i,x_{i+2}\}$ for some $i\in [5]$ (where $x_7=x_2$). Thus,  $\rho(V(C))\le 5|V(C)|$ since $(G,X)$ is 5-massed. Hence, 
      \begin{align*} \rho(V(G)\setminus X)
           &=\sum_{C\in\mathcal{C}}\rho(V(C))+e(\{a,b\},X)+e(a,b)\\
            &\le 5\sum_{C\in\mathcal{C}}|V(C)|+11\\
            &= 5(|V(G)|-7)+11\\
           &=5|V(G)\setminus X|+1.
      \end{align*}
      Therefore, since $\rho(V(G)\setminus X)\ge 5|V(G)\setminus X|+1$ (as $(G,X)$ is 5-massed), we must have $\rho(V(G)\setminus X)=5|V(G)\setminus X|+1$. So $\rho(V(C))=5|V(C)|$ for all $C\in\mathcal{C}$, $e(\{a,b\},X)=10$, and $e(a,b)=1$. Thus, $G$ contains edges $ab, ax_i,bx_i$ for all $i\in[5]$. This shows the conclusion of Theorem~\ref{main} holds, contradicting (c). 
    \end{proof}

	\begin{lemma} \label{2.1}
		$(G,X)$ has no rigid separation of order at most $5$. 
	\end{lemma}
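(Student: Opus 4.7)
\textit{Proof Proposal.} Suppose toward contradiction that $(G,X)$ admits a rigid separation $(A,B)$ of order $k\le 5$; let $S=A\cap B$ so that $|S|=k$, $B\setminus A\neq\emptyset$, and $(G[B],S)$ is cycle-linked. The strategy is to reduce $(G,X)$ to a strictly smaller instance $(G^*,X)$ that still satisfies (a), (b), (c), thereby contradicting the minimality clause (d). Define $G^*=G[A]\cup K_S$, obtained from the induced subgraph on $A$ by adding every missing edge among vertices of $S$; since $|B\setminus A|\ge 1$, we have $|V(G^*)|<|V(G)|$.

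The heart of the argument is verifying (b), i.e., that $(G^*,X)$ is not cycle-linked. Suppose for contradiction that for some ordering of $X$ there is a $C_{|X|}$-minor of $G^*$ with bags $Y_1,\ldots,Y_h$. The ``new'' edges of this minor---those in $K_S\setminus E(G)$ used either to connect consecutive bags or for internal connectivity of a bag---all have both endpoints in $S$. Traversing the cyclic arrangement of bags in $G^*$ induces a cyclic ordering on the occupied vertices of $S$; applying cycle-linkedness of $(G[B],S)$ to this ordering produces a $C_{|S|}$-minor of $G[B]$ whose bags yield vertex-disjoint paths through $G[B\setminus S]$ connecting the relevant pairs of $S$-vertices. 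Stitching these paths into the $Y_i$'s produces a $C_{|X|}$-minor of $G$, contradicting (b) for $(G,X)$.

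For (a), we verify (M1) and (M2). Since $\rho_{G^*}(A\setminus X)\ge\rho_{G[A]}(A\setminus X)=\rho_G(V(G)\setminus X)-\rho_G(B\setminus A)$, and since (M1) for $G$ gives $\rho_G(V(G)\setminus X)>5|A\setminus X|+5|B\setminus A|$ while (M2) applied to $(A,B)$ gives $\rho_G(B\setminus A)\le 5|B\setminus A|$ (valid when $k<|X|$), we obtain $\rho_{G^*}(A\setminus X)>5|A\setminus X|$. For (M2), the clique structure of $S$ in $G^*$ forces every separation $(A',B')$ of $G^*$ of order $<|X|$ to satisfy $S\subseteq A'$ or $S\subseteq B'$; the corresponding pullback to $G$ (namely $(A'\cup(B\setminus A),B')$ or $(A',B'\cup(B\setminus A))$) is a valid separation of $G$ of the same order, so (M2) for $G$ yields the desired bound after accounting for the added $K_S$-edges. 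Condition (c) is transferred because any structural realization of Theorem~\ref{main}'s conclusion on $(G^*,X)$ induces the same structure on $(G,X)$ by restoring the vertices of $B\setminus A$.

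The main obstacle lies in the lifting step for (b): one must produce vertex-disjoint paths in $G[B\setminus S]$ that simultaneously realize all ``new'' $K_S$-edges used in the $G^*$-minor. The key insight is that these edges, together with the cyclic bag structure, induce a cyclic ordering on the used subset of $S$, and cycle-linkedness of $(G[B],S)$ provides exactly the routing needed; a careful case analysis on how $S$ is distributed among the $Y_i$'s, possibly after normalizing the $G^*$-minor to minimize $K_S$-edge use within bags, should make this rigorous. A secondary delicate point is the boundary case $k=|X|=5$, where (M2) is silent on $(A,B)$; here a direct argument combining cycle-linkedness of $(G[B],S)$ with suitable routing in $G[A]$ produces a $C_5$-minor of $G$ rooted at any ordering of $X$, again contradicting (b) and ruling out such a rigid separation.
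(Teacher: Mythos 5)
Your proposal hits the paper's central idea: define $G^*=G[A]\cup K_S$ (the paper's $G'[A]$) and argue that it is a strictly smaller counterexample, contradicting the minimality of $(G,X)$. Your treatment of (M1), of the $S\subseteq A'$ branch of (M2), and of the transfer of (c) all match the paper's proof. However there are two genuine gaps, both of which the paper handles by a device you do not use.

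First, the paper chooses $(A,B)$ extremally: among rigid separations of order at most $5$, it first minimizes $|A\cap B|$ and then minimizes $A$. This choice is essential in exactly the two places where your argument is incomplete. (i) For the case $k=|A\cap B|\ge|X|$ you write only that ``a direct argument \ldots produces a $C_5$-minor,'' but this is not an argument: you would need $|X|$ disjoint paths from $X$ to $S=A\cap B$ inside $G[A]$, and there is no reason they exist. The paper applies Menger's theorem; the no-paths outcome produces a rigid separation of order $<|X|$, contradicting the minimality of $|A\cap B|$, so the case $k\ge |X|$ simply does not occur. (ii) In the $S\subseteq B'$ branch of the (M2) check, your claim that ``the corresponding pullback to $G$ \ldots (M2) for $G$ yields the desired bound after accounting for the added $K_S$-edges'' fails: unwinding the counts gives $\rho_{G[A]}(B'\setminus A')\le 5|B'\setminus A'|+\bigl(5|B\setminus A|-\rho_{G[B]}(B\setminus A)\bigr)$, and (M2) makes the parenthetical \emph{nonnegative}, not zero; moreover the added $K_S$-edges inside $B'\setminus A'$ only make $\rho_{G^*}(B'\setminus A')$ larger. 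There is no way to close this by arithmetic alone. The paper instead takes $(A',B')$ with $B'$ minimal, observes that $(G^*[B'],A'\cap B')$ is $5$-massed with $|A'\cap B'|<|X|$, invokes the inductive minimality of $(G,X)$ to conclude it is cycle-linked, and thereby manufactures a rigid separation $(A',B'\cup B)$ with $A'\subsetneq A$ --- contradicting the minimality of $A$. That recursion through the choice of counterexample, not a pullback of (M2), is what makes this branch work, and it is missing from your write-up.

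A smaller point: your verification that $(G^*,X)$ is not cycle-linked is the same lifting idea the paper uses (replace $K_S$-edges by routing through $G[B]$, using cycle-linkedness of $(G[B],S)$), but you flag --- correctly --- that it requires a careful normalization of which $K_S$-edges the $G^*$-minor uses and how the cyclic order on the occupied vertices of $S$ is read off. The paper states this step in one sentence without the details; your instinct that it deserves more care is sound, but you should also note that the paper's own minimality of $B'$ and of $A$ do the real structural work elsewhere.
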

	\begin{proof}
		Suppose for a contradiction that $(A,B)$ is a rigid separation of $(G,X)$ of order at most $5$. We choose $(A,B)$ such that $|A\cap B|$ is minimal and, subject to this, $A$ is minimal. 

         Suppose $|A\cap B|\ge |X|$. If $G[A]$ has $|X|$ disjoint paths from $X$ to $A\cap B$, then $(G,X)$ is cycle-linked, a contradiction. So  $G[A]$ does not contain $|X|$ disjoint paths from $X$ to $A\cap B$. Then by Menger's theorem, $G[A]$ has a separation $(A',B')$ such that $|A'\cap B'|<|X|$, $X\subseteq A'$, and $A\cap B\subseteq B'$. Choose $(A',B')$ with $|A'\cap B'|$ minimum. Then $G[B']$ contains $|A'\cap B'|$ disjoint paths from $A'\cap B'$ to $A\cap B$. Thus $(A',B'\cup B)$ is a rigid separation  in $G$ of order less than $|A\cap B|$, contradicting the choice of $(A,B)$. 
         
         So we may assume $|A\cap B|\le |X|-1\le 4$. Consider the pair $(G'[A],X)$ where $G'$ is obtained from $G[A]$  by adding edges to make $G'[A\cap B]$ complete. Note that any cycle minor in $(G'[A],X)$  can be extended to a cycle minor in $(G,X)$ by replacing edges in $G'[A\cap B]$ with appropriate subgraphs of $G[B]$. Hence, $(G'[A], X)$ is not cycle-linked, since $(G,X)$ is not. 

         We now show that $(G'[A],X)$ is 5-massed.       Since $(G,X)$ is $5$-massed and $|A\cap B|<|X|$, we have $\rho(B\setminus A)\leq 5 |B\setminus A|$. 
        Hence, 
        \[\rho(V(G')\setminus X)\ge \rho(V(G)\setminus X)-\rho(B\setminus A)\ge (5(V(G)\setminus X)+1)-5|B\setminus A|=5|A\setminus X|+1.\]
        So $(G'[A],X)$ satisfies  \hyperref[M1]{(M1)}. Suppose $(G'[A],X)$ is not 5-massed; then it violates (M2). Hence, there is a separation $(A',B')$ of $(G'[A],X)$ of order less than $|X|$ such that $\rho_{G'[A]}(B'\setminus A')> 5 |B'\setminus A'|$. Choose such $(A',B')$ with $B'$ minimal.  Since $G'[A\cap B]$ is a clique, $A\cap B\subseteq A'$ or $A\cap B\subseteq B'$. If $A\cap B\subseteq A'$ then $(A'\cup B,B')$ is a separation of $(G,X)$ violating \hyperref[M2]{(M2)}, a contradiction. Thus we may assume $A\cap B\subseteq B'$. Consider the pair $(G'[B'],A'\cap B')$, which satisfies (M1) (as $\rho_{G'[A]}(B'-A')> 5 |B'-A'|$) and (M2) (by minimality of $B'$). Since $|A'\cap B'|<|X|$, $(G'[B'],A'\cap B')$ is cycle-linked (by the choice of $(G,X)$). Thus, $(G[B'\cup B],A'\cap B')$ is also cycle-linked; so the separation $(A',B'\cup B)$ is rigid in $(G,X)$, which contradicts the choice of $(A,B)$ (that is, the minimality of $A$).

        Since $(G'[A],X)$ is 5-massed but not cycle-linked, it follows from the choice of $(G,X)$ that $(G'[A],X)$ violates (c), i.e., the conclusion of Theorem~\ref{main} holds for $(G'[A],X)$. Thus, $|X|=5$, the vertices in $X$ may be labeled as $x_1,\ldots, x_5$ such that $x_ix_{i+1}\notin E(G)$ for $i\in [5]$, and there exist $a,b\in A\setminus X$ such that 
            \begin{itemize}
             \item [(1)] $ab\in E(G'[A])$ and $a,b\in N_{G'[A]}(x_i)$ for all $i\in [5]$, and
             \item [(2)] if $C$ is a component of $G'[A]-(X\cup \{a,b\})$ then $\rho_{G'[A]}(C)=5|V(C)|$ and $N_{G'[A]}(C)\subseteq \{a,b,x_i,x_{i+2}\}$ for some $i\in [5]$.
           \end{itemize}
           Since $G'[A\cap B]$ is a clique in $G'[A]$, either $A\cap B\subseteq N_{G'[A]}(C)\cup V(C)$ for some component $C$ of $G'-(X\cup \{a,b\})$, or $A\cap B\subseteq \{a,b,x_i,x_{i+2}\}$ for some $i\in[5]$. This implies that $(G,X)$ also satisfies (1) and (2) with the same choice of $a,b$ and ordering on $X$, contradicting (c).          
    \end{proof}

        We derive the following convenient fact from Lemma~\ref{2.1}

    \begin{corollary} \label{2.2}
        $G$ contains no $K_5$.
    \end{corollary}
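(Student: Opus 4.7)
\noindent\emph{Proof proposal.} Suppose for contradiction that $G$ contains a copy of $K_5$ on vertex set $K$. The plan is, in each of a few cases, to derive a contradiction either by showing directly that $(G,X)$ is cycle-linked (contradicting (b)) or by exhibiting a rigid separation of $(G,X)$ of order at most $5$ (contradicting Lemma~\ref{2.1}).

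If $K \subseteq X$, then $|X|=5$, $X = K$, and since $G[X] \supseteq K_5$ every cyclic ordering of $X$ admits singleton branch sets joined by $K_5$-edges, so $(G,X)$ is cycle-linked. Hence we may assume $K \not\subseteq X$; pick $v \in K \setminus X$ and consider the separation $(A,B) = (V(G) \setminus \{v\},\, N_G[v])$ of $(G,X)$. This is a valid separation since $v \notin X$ and all neighbors of $v$ lie in $B$; moreover $B \setminus A = \{v\} \ne \emptyset$ and the order is $|N_G(v)| \ge 4$ (as $K \setminus \{v\} \subseteq N_G(v)$). I claim that whenever $|N_G(v)| \le 5$, the pair $(G[B], N_G(v))$ is cycle-linked, so $(A,B)$ is rigid of order at most $5$ and Lemma~\ref{2.1} is contradicted. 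Indeed, $K \setminus \{v\}$ induces a $K_4$ in $G[B]$ and $v$ is joined to every vertex of $N_G(v)$; for any ordering of $N_G(v)$, one places the unique vertex of $N_G(v) \setminus K$ (if any) together with $v$ in a single branch set, which lets $v$ bridge its two cyclic neighbors through its $K_5$-edges, while every other branch set is a singleton joined to the next by a $K_5$-edge.

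It remains to handle the case that $|N_G(v)| \ge 6$ for every $v \in K \setminus X$; in particular each such $v$ has at least two neighbors outside $K$. I would next consider components of $G - K$: if some component $C$ satisfies $V(C) \cap X = \emptyset$, then $(A,B) = (V(G) \setminus V(C),\, V(C) \cup K)$ is a rigid separation of order $|K| = 5$, since $G[B] \supseteq K_5$ makes $(G[B], K)$ cycle-linked. Hence we may assume every component of $G - K$ meets $X$. This final configuration is the main obstacle: here I would try either to route each $x \in X \setminus K$ through its component to a distinct vertex of $K \setminus X$ via a Hall-type matching (thereby using $K$ as a hub to produce a $C_5$-minor for every ordering of $X$ and contradict (b)), or, when such a matching fails, to convert the Hall obstruction into a rigid separation of order at most $5$ via a deficiency argument, invoking Lemma~\ref{exception} and the $5$-massed condition (M1)--(M2) to rule out the remaining pathological configurations.
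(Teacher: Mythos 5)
Your proposal diverges from the paper's argument, which is short and global: the paper observes that if $G$ has $|X|$ disjoint paths from $X$ to the clique $K$, then any cyclic ordering of $X$ is realized by taking the paths as branch sets and using $K$-edges between their endpoints, so $(G,X)$ is cycle-linked; otherwise Menger's theorem yields a separation $(A,B)$ of $(G,X)$ of minimum order $<|X|\le 5$ with $K\subseteq B$, and a second application of Menger inside $G[B]$ gives $|A\cap B|$ disjoint paths from $A\cap B$ to $K$, showing $(G[B],A\cap B)$ is cycle-linked and hence $(A,B)$ is rigid, contradicting Lemma~\ref{2.1}. That single dichotomy disposes of all configurations at once.

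Your local, vertex-by-vertex analysis is correct as far as it goes. The case $K\subseteq X$ and the case where some $v\in K\setminus X$ has $|N_G(v)|\le 5$ are handled correctly, and so is the case where some component of $G-K$ avoids $X$ (there the separation $(V(G)\setminus V(C),\,V(C)\cup K)$ of order $5$ is rigid since $K$ is a clique inside $B$). But your final case --- every $v\in K\setminus X$ has degree at least $6$ and every component of $G-K$ meets $X$ --- is left unresolved. You gesture at a Hall-type matching from $X\setminus K$ into $K\setminus X$, but there is no reason such a matching exists: $|X|\le 5$ while $K\setminus X$ can be small, vertices of $X\setminus K$ may have no access to $K\setminus X$ at all (only through other vertices of $X$), and the needed disjointness of routing paths is exactly the content of Menger, which you would have to invoke anyway. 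The ``convert the Hall obstruction into a rigid separation'' step is precisely where the real work is, and your sketch does not supply it. In effect, completing case 3b correctly would collapse back to the paper's Menger-based proof, but with several superfluous preliminary cases. As written, the proposal has a genuine gap at its final and hardest case.
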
 

    \begin{proof}
        For, suppose that $G$ has a clique $H$ on $5$ vertices. If $G$ contains $|X|$ vertex disjoint paths from $X$ to $V(H)$, then clearly $(G,X)$ is cycle-linked. Hence, no such $|X|$ disjoint paths exist. Therefore, by Menger's theorem, $(G,X)$ has a separation $(A,B)$ such that $V(H)\subseteq B$ and $|A\cap B|<|X|$. Choose $(A,B)$ such that $|A\cap B|$ is minimum. By Menger's theorem again, $B$ contains $|A\cap B|$ disjoint paths from $A\cap B$ to $V(H)$, which shows that $(G[B],A\cap B)$ is cycle-linked. However, this means that $(A,B)$ is a rigid separation in $(G,X)$, contradicting Lemma~\ref{2.1}.  
    \end{proof}

    \begin{lemma}\label{5cut}
        If $|X|=5$, then for every separation $(A,B)$ of $(G,X)$ of order 5, we have $\rho(B\setminus A)\le 5|B\setminus A|+1$, and equality holds only when $(G[B],A\cap B)$ satisfies the conclusion of Theorem~\ref{main}.
    \end{lemma}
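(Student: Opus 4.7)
The plan is to mimic the proof of Lemma~\ref{2.1}: reduce to the pair $(G[B], A\cap B)$ and invoke the minimality of the counterexample $(G,X)$. Write $X'=A\cap B$, so $|X'|=5$. If $B\setminus A=\emptyset$ then $\rho(B\setminus A)=0\le 1$ and the lemma holds without equality, so I assume $B\setminus A\ne\emptyset$. By Lemma~\ref{2.1}, the order-$5$ separation $(A,B)$ is not rigid, so $(G[B], X')$ is not cycle-linked. If $\rho(B\setminus A)\le 5|B\setminus A|$ the conclusion holds strictly, so the case of interest is $\rho(B\setminus A)\ge 5|B\setminus A|+1$.

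In that case I would verify that $(G[B], X')$ is $5$-massed. Since no edge of $G$ joins $B\setminus A$ to $A\setminus B$, we have $\rho_{G[B]}(S)=\rho_G(S)$ for every $S\subseteq B\setminus A$, and this gives (M1) at once. For (M2), I would argue by contradiction: given a separation $(A',B')$ of $(G[B], X')$ of order less than $5$ with $\rho_{G[B]}(B'\setminus A')>5|B'\setminus A'|$, I would lift it to $(A'\cup (A\setminus B), B')$, which is a separation of $(G,X)$ of order $|A'\cap B'|<5$. This lift is valid because $X\cap B\subseteq X'\subseteq A'$, $X\setminus B\subseteq A\setminus B$, and no new cross-edges appear (both $(A,B)$ in $G$ and $(A',B')$ in $G[B]$ block them). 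Then (M2) of $(G,X)$ combined with $\rho_G(B'\setminus A')=\rho_{G[B]}(B'\setminus A')$ contradicts the choice of $(A',B')$.

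Having shown $(G[B], X')$ is $5$-massed and not cycle-linked, I would apply the minimality of $(G,X)$: whenever $A\setminus B\ne\emptyset$, $|V(G[B])|+|E(G[B])|<|V(G)|+|E(G)|$, so $(G[B], X')$ is not itself a counterexample and therefore must satisfy the conclusion of Theorem~\ref{main}. That conclusion pins down $\rho(B\setminus A)=5|B\setminus A|+1$, which is precisely the equality case of the lemma with its structural description. The main obstacle is the careful bookkeeping needed to lift a hypothetical violator of (M2) cleanly across the separation, together with the degenerate corner $A\setminus B=\emptyset$, where $(A,B)=(X,V(G))$ and $(G[B],X')=(G,X)$; there the strict reduction in $|V|+|E|$ is unavailable, but the lemma's claim reduces tautologically to the equality case being about $(G,X)$ itself, so the bound is consistent with the counterexample setup.
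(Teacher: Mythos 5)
Your proposal matches the paper's argument essentially step for step: assume $\rho(B\setminus A)>5|B\setminus A|$, verify that $(G[B],A\cap B)$ is $5$-massed, note it is not cycle-linked by Lemma~\ref{2.1}, and then appeal to the minimality of the counterexample to conclude that the structure described in Theorem~\ref{main} holds for $(G[B],A\cap B)$, which in particular pins $\rho(B\setminus A)$ to $5|B\setminus A|+1$. Your explicit lift of a hypothetical \hyperref[M2]{(M2)} violator $(A',B')$ to the separation $(A'\cup(A\setminus B),B')$ of $(G,X)$ is correct and merely spells out what the paper states in one sentence; the order, the $X\subseteq A'\cup(A\setminus B)$ condition, and the absence of new cross-edges all check out because $B'\setminus A'\subseteq B\setminus A$.

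The one place where your write-up goes astray is the degenerate corner $A\setminus B=\emptyset$, i.e.\ $(A,B)=(X,V(G))$. You correctly flag that the strict reduction in $|V|+|E|$ is unavailable there, but your resolution --- that the claim ``reduces tautologically'' and ``is consistent with the counterexample setup'' --- does not hold up. If the lemma were valid for $(A,B)=(X,V(G))$, then \hyperref[M1]{(M1)} forces $\rho(V(G)\setminus X)\ge 5|V(G)\setminus X|+1$, so equality would hold, and the lemma's second clause would then assert that $(G,X)$ satisfies the conclusion of Theorem~\ref{main}, directly contradicting condition~(c). So in that corner the lemma, read literally, is \emph{not} tautological and is in fact incompatible with the counterexample hypotheses. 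The paper's own proof has the same tacit assumption: ``by the choice of $(G,X)$, $(G[B],A\cap B)$ violates~(c)'' only follows when $|V(G[B])|+|E(G[B])|<|V(G)|+|E(G)|$, i.e.\ when $A\setminus B\ne\emptyset$. Neither version actually breaks the paper, since every later application of Lemma~\ref{5cut} is to a separation with $A\setminus B\ne\emptyset$; the honest fix is to restrict the lemma to such separations (or simply note that $(A,B)=(X,V(G))$ never occurs in the applications) rather than to claim the degenerate case is vacuous.
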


    \begin{proof}
        Suppose $(A,B)$ is a separation of $(G,X)$ of order 5. If $\rho(B\setminus A)\le 5|B\setminus A|$ then there is nothing to prove. So assume $\rho(B\setminus A)> 5|B\setminus A|$; thus $(G[B], A\cap B)$ satisfies \hyperlink{M1}{(M1)}. Note that $(G[B], A\cap B)$ also satisfies \hyperlink{M2}{(M2)} because $(G,X)$ satisfies \hyperlink{M2}{(M2)}. Now $(G[B], A\cap B)$ is not cycle-linked, since otherwise $(A,B)$ is a rigid separation in $(G,X)$, contradicting Lemma \ref{2.1}. Hence, by the choice of $(G,X)$, $(G[B],A\cap B)$ violates (c). Therefore, the conclusion of Theorem~\ref{main} holds for $(G[B],A\cap B)$ and, thus, the assertion of the lemma holds.
    \end{proof}

     To force a small dense subgraph in $G$, we consider $(G/uv,X)$ for all edges $uv$ with $\{u,v\}\not\subseteq X$. 
     
	\begin{lemma} \label{2.3}
		Let $uv\in E(G)$ with $\{u,v\}\not\subseteq X$. 
        If $|\{u,v\}\cap X|=0$ then $|N(u)\cap N(v)|\geq 5$. 
        If $|\{u,v\}\cap X|=1$ then $|(N(u)\cap N(v))\backslash X| + |N(\{u,v\}\setminus X)\cap X| \geq 6$.	
    \end{lemma}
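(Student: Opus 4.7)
\emph{Proof proposal.}
The plan is to contract the edge $uv$ and apply the minimality (d) to the resulting pair $(G',X')$, where $G'=G/uv$, $w$ is the new vertex, and $X'=X$ if $\{u,v\}\cap X=\emptyset$ or $X'=(X\setminus\{u\})\cup\{w\}$ if $u\in X$. Suppose for contradiction that the bound stated in the lemma fails for $uv$.

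The first step is to verify that $(G',X')$ is $5$-massed. Contracting $uv$ drops the vertex count by $1$ and the edge count by $1+|N(u)\cap N(v)|$, with an extra accounting correction involving $|N(v)\cap X|$ when $u\in X$; a direct edge-count calculation shows that failure of the stated bound leaves $\rho_{G'}(V(G')\setminus X')-5|V(G')\setminus X'|>0$, giving \hyperref[M1]{(M1)}. For \hyperref[M2]{(M2)}, any separation $(A',B')$ of $(G',X')$ of order less than $|X'|$ lifts to a separation $(A,B)$ of $(G,X)$. If $w\notin A'\cap B'$, the order is preserved and $\rho_G(B\setminus A)\geq\rho_{G'}(B'\setminus A')$, so \hyperref[M2]{(M2)} for $(G,X)$ forces the bound; if $w\in A'\cap B'$, the order rises by $1$, and is either still below $|X|$ (so \hyperref[M2]{(M2)} of $(G,X)$ again suffices) or equals $|X|=5$, in which case Lemma \ref{5cut} pins $\rho_G(B\setminus A)=5|B\setminus A|+1$, so the conclusion of Theorem \ref{main} holds for $(G[B],A\cap B)$ and supplies $a,b\in B\setminus A$ that are common neighbors of $u$ and $v$ in $G$, contradicting the tightness $|N(u)\cap N(v)\cap(B\setminus A)|=0$ forced by the density accounting.

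With $(G',X')$ now $5$-massed, it cannot be cycle-linked, since any $C_k$-minor in $(G',X')$ lifts to a $C_k$-minor in $(G,X)$ by replacing the branch set containing $w$ with the corresponding vertex set together with $\{u,v\}$, which stays internally connected through the edge $uv$; this would contradict (b). By the minimality (d), $(G',X')$ then satisfies the conclusion of Theorem \ref{main}: in particular $|X'|=5$ and there exist $a',b'\in V(G')\setminus X'$ with $a'b'\in E(G')$ and $a',b'\in N_{G'}(x)$ for every $x\in X'$.

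To finish, I split on whether $w\in\{a',b'\}$. If $w\notin\{a',b'\}$, then $a',b'$ are distinct vertices of $V(G)\setminus X$ both lying in $\bigcap_{x\in X}N_G(x)$, contradicting Lemma \ref{exception}. If $w\in\{a',b'\}$, say $w=a'$, then $b'$ is by Lemma \ref{exception} the unique common neighbor of $X$ in $G$, so neither $u$ nor $v$ lies in $\bigcap_{x\in X}N_G(x)$; yet each vertex of $X'$ is adjacent to $u$ or $v$ since $w\in N_{G'}(x)$ for all $x\in X'$. I would then combine the structural conclusion (2) for $(G',X')$, which bounds the neighborhood of each component of $G'-(X'\cup\{w,b'\})$ by a four-element set, with \hyperref[M2]{(M2)} and Lemma \ref{5cut}, to control the distribution of $N(u)\cap N(v)$ among the components of $G-(X\cup\{u,v,b'\})$; the resulting tight density arithmetic either exhibits a pair $(a,b)\in V(G)\setminus X$ witnessing the conclusion of Theorem \ref{main} for $(G,X)$ (contradicting (c)), or produces a second vertex in $\bigcap_{x\in X}N_G(x)$, again contradicting Lemma \ref{exception}. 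This last case $w\in\{a',b'\}$ is the main obstacle: the Theorem \ref{main} conclusion for $(G',X')$ does not lift transparently to $(G,X)$, and one has to chase the precise accounting of $N(u)\cap N(v)$ that the contraction hides.
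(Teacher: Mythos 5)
Your overall blueprint follows the paper's approach (contract $uv$, show $(G',X')$ is $5$-massed, invoke minimality, and analyze where $w$ sits relative to $X'\cup\{a',b'\}$). But there are real gaps in the execution.

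First, your verification of \hyperref[M2]{(M2)} for $(G',X')$ does not go through in the subcase $w\in B'\setminus A'$. You put this under ``$w\notin A'\cap B'$, so the order is preserved and $\rho_G(B\setminus A)\ge\rho_{G'}(B'\setminus A')$.'' The order is indeed preserved, but $|B\setminus A|=|B'\setminus A'|+1$ because $w$ is replaced by $\{u,v\}$, so \hyperref[M2]{(M2)} for $(G,X)$ gives only $\rho_{G'}(B'\setminus A')\le\rho_G(B\setminus A)\le 5|B'\setminus A'|+5$, not the needed $\le 5|B'\setminus A'|$. The paper handles this case by a quite different argument: choosing $B'$ minimal makes $(G'[B'],A'\cap B')$ itself $5$-massed and therefore, by minimality, cycle-linked, so $(A,B)$ becomes a rigid separation of order at most $4$ contradicting Lemma~\ref{2.1}. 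That mechanism is missing from your argument.

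Second, and more seriously, your step for $w\notin\{a',b'\}$ is wrong when $u\in X$ (so that $w\in X'$). You conclude ``$a',b'$ are distinct vertices of $V(G)\setminus X$ both lying in $\bigcap_{x\in X}N_G(x)$, contradicting Lemma~\ref{exception}.'' But $a'$ adjacent to $w$ in $G'$ only tells you that $a'$ is adjacent to $u$ \emph{or} $v$ in $G$; it need not be adjacent to $u$. So in the case $|\{u,v\}\cap X|=1$ you cannot conclude $a',b'\in\bigcap_{x\in X}N_G(x)$. This is exactly what the paper isolates as Case~$1$ ($w\in X$), which it treats with a page of careful density accounting over the partition $C_{i,i+2}$ together with Lemma~\ref{5cut}. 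Your dichotomy ``$w\in\{a',b'\}$ or not'' collapses the paper's three-way split ($w\notin X\cup\{a,b\}$, $w\in X$, $w\in\{a,b\}$) and mishandles the middle case.

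Third, you explicitly acknowledge that the $w\in\{a',b'\}$ branch is only a sketch. That branch (the paper's Case~$2$, including both Subcases~$2.1$ and~$2.2$) is the bulk of the work in this lemma and cannot be waved away as ``density arithmetic''; the paper has to optimize the partition of Claim~$3$ in different ways depending on $|N_G(u)\cap X|$ and find explicit $C_5$-minors to rule out certain neighborhood configurations before the counting closes. So while the strategy is correct and matches the paper's, the proof as written is incomplete in two places and incorrect in a third.
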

	\begin{proof}	
        Let $G'=G/uv$, and let $w$ denote the vertex resulting from the contraction of $uv$. Then $(G',X)$ is not cycle-linked, since otherwise $(G,X)$ would also be cycle-linked. 
         
         \medskip
        {\it Claim} 1. We may assume that $(G',X)$ satisfies \hyperref[M1]{(M1)}. 
        
        For, otherwise, we have $\rho(V(G')\setminus X)\leq 5 |V(G')\setminus X|$. Therefore, since $\rho(V(G)\setminus X)>5|V(G)\setminus X|$, we have $$\rho(V(G)\setminus X) -\rho(V(G')\setminus X) > 5 |V(G)\setminus X| - 5 |V(G')\setminus X| =5.$$  
        If $|\{u,v\}\cap X|=0$ then $|N(u)\cap N(v)|=\rho(V(G)\setminus X) -\rho(V(G')\setminus X)-1\ge 5$.  If $|\{u,v\}\cap X|=1$ then $|(N(u)\cap N(v))\backslash X| + |N(\{u,v\}\setminus X)\cap X|  = \rho(V(G)\setminus X) -\rho(V(G')\setminus X)\geq 6$.

        \medskip
        {\it Claim} 2. We may assume that $(G',X)$ satisfies \hyperref[M2]{(M2)} and, hence, is 5-massed. 
        
		For, suppose that violates \hyperref[M2]{(M2)}. Then there is a separation $(A',B')$ of $(G',X)$ of order at most $|X|-1$ such that $\rho_{G'}(B'\setminus A')> 5 |B'\setminus A'|$. Choose such $(A',B')$ with $B'$ minimal. 
        Let $A=(A'\setminus \{w\})\cup \{u,v\}$ if $w\in A'$, and $A=A'$ otherwise; similarly, let $B=(B'\setminus \{w\})\cup \{u,v\}$ if $w\in B'$, and $B=B'$ otherwise. Then $(A,B)$ is a separation of $(G,X)$ of order $|A\cap B|\le |A'\cap B'|+1\le 5$. 
        
        Suppose $w\in B'\setminus A'$. Note that $(G'[B'], A'\cap B')$ satisfies \hyperref[M1]{(M1)} (by the choice of $(A',B')$) and \hyperref[M2]{(M2)} (by minimality of $B'$). 
        By the choice of $(G,X)$ and because $|A'\cap B'|<5$, $(G'[B'], A'\cap B')$ is cycle-linked. Hence,  $(G[B],A\cap B)$ is cycle-linked, since $A\cap B=A'\cap B'$ and $G'[B']=G[B]/uv$. Now $(A,B)$ is a rigid separation of order at most $4$ in $(G,X)$, contradicting Lemma \ref{2.1}.
        
        So $w\in A'$. Thus, $\rho_{G}(B\setminus A)=  \rho_{G'}(B'\setminus A')>5|B'\setminus A'|= 5|B\setminus A|$, i.e.,  $(G[B], A\cap B)$ satisfies \hyperref[M1]{(M1)}. Hence  $(G[B], A\cap B)$ is 5-massed, as it satisfies \hyperref[M2]{(M2)} (by the minimality of $B'$). Moreover, $(G[B], A\cap B)$ is not cycle-linked, since otherwise $(A,B)$ is a rigid separation in $(G,X)$ of order at most 5, contradicting Lemma \ref{2.1}.  Hence, by the choice of $(G,X)$, (c) fails for $(G[B], A\cap B)$, i.e., $(G[B], A\cap B)$ satisfies the conclusion of Theorem \ref{main}. In particular,  $|A\cap B|=5$ (so $|A'\cap B'|=4$),  
        $\rho_{G[B]}(B\setminus A)=5|B\setminus A|+1$, and $|N(u)\cap N(v)\cap (B\setminus A)|\ge 2$. So $\rho_{G'}(B'\setminus A')\le \rho_{G[B]}(B\setminus A)-2< 5|B\setminus A|=5|B'\setminus A'|$, which contradicts the choice of $(A',B')$   (that $\rho_{G'}(B'\setminus A')> 5|B'\setminus A'|$), completing the proof of Claim 2. 

    \medskip

        By Claim 2,  $(G',X)$ is $5$-massed but not cycle-linked. So by the choice of $(G,X)$,  (c) fails for $(G',X)$; that is, the conclusions of Theorem~\ref{main} holds for $(G',X)$.  Therefore, $|X|=5$, $\rho_{G'}(V(G')\setminus X)=5|V(G')\setminus X|+1$,  the vertices in $X$ may be labeled as $x_1,\ldots, x_5$ and there exist $a,b\in V(G')\setminus X$, such that 
        \begin{itemize}
            \item [(i)] $x_ix_{i+1}\notin E(G')$ for $i\in [5]$,     \item [(ii)] $ab\in E(G')$ and $a,b\in N_{G'}(x_i)$ for all $i\in [5]$, and
            \item [(iii)] if $C$ is a component of $G'-(X\cup \{a,b\})$ then $\rho_{G'}(V(C))=5|V(C)|$ and $N_{G'}(V(C))\subseteq \{a,b,x_i,x_{i+2}\}$ for some $i\in [5]$. 
        \end{itemize}
        
         By (iii), we have the following
         \medskip
         
         {\it Claim} 3. $V(G')\setminus (X\cup \{a,b\})$ can be partitioned into five (possibly empty) disjoint sets $V(G')\setminus (X\cup \{a,b\})=C_{1,3}\sqcup C_{2,4}\sqcup C_{3,5}\sqcup C_{4,1}\sqcup C_{5,2}$,  such that $N_{G'}(C_{i,i+2})\subseteq\{a,b,x_i,x_{i+2}\}$ for $i\in [5]$.
        
        \medskip
        {\it Claim} 4. $w\in X\cup \{a,b\}$. 
        
        For, suppose $w\notin X\cup \{a,b\}$. 
        Then $a,b\in\bigcap_{x\in X}N_G(x)$, a contradiction to Lemma \ref{exception}.
        
        \medskip

        We consider two cases based on Claim 4. 
        \medskip

        {\it Case} 1. $w\in X$. 
        
        Without loss of generality, we may assume that $u=x_1$ and $v\in V(G)\setminus X$. 
        Note that, by (iii),  $N_G(C_{i,i+2})=N_{G'}(C_{i,i+2})\subseteq\{a,b,x_i,x_{i+2}\}$ and $\rho_G(C_{i,i+2})=5|C_{i,i+2}|$ for $i\in\{2,3,5\}$, $N_G(C_{1,3})\subseteq\{a,b,x_1,x_3,v\}$, and $N_G(C_{4,1})\subseteq\{a,b,x_4,x_1,v\}$. 
        Also note that  $\{ab,vx_1\}\cup \{ax_i,bx_i: i\in\{2,3,4,5\}\}\subseteq E(G)$ (by (ii)) and $vx_2,vx_5\not\in E(G)$ (by (i)).  Moreover,  
        \begin{align*}
            &\quad \quad  \rho_G(V(G)\setminus X)-5|V(G)\setminus X|-1 \\
            &=\sum_{i\in[5]}\left(\rho_G(C_{i,i+2}\right)-5|C_{i,i+2}|)+e(\{v,a,b\},X)+|E(G[\{v,a,b\}])|-5\cdot 3-1\\
            &\le (\rho_G(C_{1,3})-5|C_{1,3}|)+(\rho_G(C_{4,1})-5|C_{4,1}|)+|E(G)\cap\{vx_3,vx_4,va,vb,x_1a,x_1b\}|-6\\
            &\le |E(G)\cap\{vx_3,vx_4,va,vb,x_1a,x_1b\}|-4,        \end{align*}        
        where the last inequality holds since $\rho(C_{i,i+2})-5|C_{i,i+2}|\le 1$ for $i\in\{1,4\}$ (by Lemma \ref{5cut}).  
        By       Lemma~\ref{exception}, we may assume $x_1\notin N(a)\cap N(b)$. Therefore, since  $\rho_G(V(G)\setminus X)-5|V(G)\setminus X|-1\ge 0$ (as $(G,X)$ satisfies (M2)),   we have  $$4\le |E(G)\cap\{vx_3,vx_4,va,vb,x_1a,x_1b\}|\le 5.$$

        Suppose $|E(G)\cap\{vx_3,vx_4,va,vb,x_1a,x_1b\}|=4$. Then we must have $\rho(C_{i,i+2})-5|C_{i,i+2}|=1$ for all $i\in \{1,4\}$, which, by Lemma \ref{5cut}, implies that $|N(u)\cap N(v)\cap C_{i,i+2}|\ge 2$ for $i\in \{1,4\}$;     
        so $|(N(u)\cap N(v))\setminus X|\ge 4$.  Since $|E(G)\cap \{vx_3,vx_4,va,vb,x_1a,x_1b\}|=4$ and $\{x_1a,x_1b\}\not\subseteq E(G)$, we have $E(G)\cap \{vx_3,vx_4\}\ne \emptyset$. Therefore, since $vx_1\in E(G)$, we have $|N(\{u,v\}\setminus X)\cap X| =|N(v)\cap X|\geq 2$. So	$|(N(u)\cap N(v))\backslash X| + |N(\{u,v\}\setminus X)\cap X| \geq 6$, as desired.	

        Now assume $|E(G)\cap\{vx_3,vx_4,va,vb,x_1a,x_1b\}|=5$. Without loss of generality, we may further assume $x_1a\not\in E(G)$. Then $vx_1,vx_3,vx_4\in E(G)$ (i.e., $|N(v)\cap X|\ge 3$), and $b\in N(u)\cap N(v)$.  Since $|E(G)\cap\{vx_3,vx_4,va,vb,x_1a,x_1b\}|=5$, there exists some $i\in \{1,4\}$ such that $\rho_G(C_{i,i+2})-5|C_{i,i+2}|=1$; hence by Lemma \ref{5cut}, $|N(u)\cap N(v)\cap C_{i,i+2}|\ge 2$.  So $|(N(u)\cap N(v))\backslash X| + |N(\{u,v\}\setminus X)\cap X| \geq (1+2)+3= 6$, as desired.

        \medskip
        {\it Case} 2.  $w\in\{a,b\}$. 
        
        Without loss of generality, let $w=b$. Then $X\subseteq N_{G'}(w)=N_G(u)\cup N_G(v)$ and $X\subseteq N_G(a)$. By Lemma~\ref{exception}, $|N_G(u)\cap X|\le 4$ and $|N_G(v)\cap X|\le 4$. 
        
        \medskip 
        {\it Subcase} 2.1. $|N_G(u)\cap X|=4$ or $|N_G(v)\cap X|=4$. 
        
         Without loss of generality, we may assume $N_G(u)\cap X=\{x_1,x_2,x_3,x_4\}$. So $x_5\in N(v)$. 
         
         We claim that for $i\in \{1,4\}$, $vx_i\notin E(G)$ and  $\{v,x_i\}\not\subseteq N(C)$ for every component $C$ of $G-(X\cup \{u,v,a\})$. For, otherwise, by the symmetry between $x_1$ and $x_4$, we may assume that $vx_4\in E(G)$ or $G-(X\cup \{u,v,a\})$ has a component $C$ with $\{v,x_4\}\subseteq N_G(C)$. Then let  $P$ be a $v$-$x_4$ path in $G[V(C)\cup \{v,x_4\}]$. Now   $\{x_1,a\}, \{x_2\}, \{x_3,u\}, V(P), \{x_5\}$ give a $C_5$-minor in $(G,x_1x_2x_3x_4 x_5)$, a contradiction.

        Therefore, we further choose the partition of $V(G')\setminus (X\cup \{a,b\})$ in Claim 3 to maximize $C_{3,5}\cup C_{5,2}$; so for each component $C$ of $G'[C_{1,3}]\cup  G'[C_{4,1}]\cup  G'[C_{2,4}]$, $N_{G'}(C)\cap \{x_1,x_4\}\ne \emptyset$.  Then by the above claim,  $N_G(C_{i,i+2})\subseteq \{u,a,x_i,x_{i+2}\}$ for $i\in \{1,2,4\}$. So $\rho_G(C_{i,i+2})\le 5|C_{i,i+2}|$ (by Claim 2) for $i\in \{1,2,4\}$.  Hence \begin{align*}
            &\quad\quad \rho_G(V(G)\setminus X)-5|V(G)\setminus X|-1\\
            &=\sum_{i\in[5]}(\rho_G(C_{i,i+2})-5|C_{i,i+2}|)+e(\{u,v,a\},X)+|E(G[\{u,v,a\}])|-5\cdot 3-1\\
            &\le (\rho_G(C_{3,5})-5|C_{3,5}|)+(\rho_G(C_{5,2})-5|C_{5,2}|)+|E(G)\cap\{vx_3,vx_2,ua,va\}|-5,
        \end{align*}        
        where the inequality holds since $\{ax_1,ax_2,ax_3,ax_4,ax_5,uv,ux_1,ux_2,ux_3,ux_4,vx_5\}\subseteq E(G)$ and $ux_5,vx_1,vx_4\not\in E(G)$. Since $(G,X)$ satisfies (M1), $\rho_G(V(G)\setminus X)-5|V(G)\setminus X|-1\ge 0$. For $i\in \{3,5\}$, if $|N_G(C_{i,i+2})|\le 4$ then $\rho_G(C_{i,i+2})-5|C_{i,i+2}|\le 0$ (by Claim 2); if $|N_G(C_{i,i+2})|=5$ then by Lemma \ref{5cut},  $\rho_G(C_{i,i+2})-5|C_{i,i+2}|\le 1$ with equality only when $(G[C_{i,i+2}\cup N_G(C_{i,i+2})], N_G(C_{i,i+2}))$ satisfies the conclusion of Theorem~\ref{main} (in particular, $|N_G(u)\cap N_G(v)\cap C_{i,i+2}|\ge 2$).  Hence,  $|E(G)\cap \{vx_3,vx_2,ua,va\}|\in \{3,4\}$. 
        
        First, suppose  $|E(G)\cap \{vx_3,vx_2,ua,va\}|=3$. Then, for both $i\in \{3,5\}$, $\rho_G(C_{i,i+2})-5|C_{i,i+2}|=1$ and  $|N_G(u)\cap N_G(v)\cap C_{i,i+2}|\ge 2$. Since  $N_G(u)\cap N_G(v)\cap \{x_3,x_2\}\ne \emptyset$ (as $|E(G)\cap \{vx_3,vx_2,ua,va\}|=3$),  we have $|N_G(u)\cap N_G(v)|\ge 5$ as desired. 
        
        Now suppose $|E(G)\cap \{vx_3,vx_2,ua,va\}|=4$.  Then $a,x_2,x_3\in N_G(u)\cap N_G(v)$ and, for some $i\in \{3,5\}$, $\rho_G(C_{i,i+2})-5|C_{i,i+2}|=1$ and  $|N_G(u)\cap N_G(v)\cap C_{i,i+2}|\ge 2$. Hence,   $|N_G(u)\cap N_G(v)|\ge 5$ as desired.

        \medskip
        {\it Subcase} 2.2.  $|N_G(u)\cap X|\le 3$ and $|N_G(v)\cap X|\le 3$. 

        Since $X\subseteq N_{G}(u)\cup N_G(v)$ and $|X|=5$, we may assume $|N_G(u)\cap X|=3$. If $N_G(u)=\{x_i,x_{i+1},x_{i+2}\}$ for some $i$, then $x_{i+3}, x_{i+4}\in N_G(v)$ and, hence, $\{x_i,a\}, \{x_{i+1}\}$, $\{x_{i+2},u\}$, $\{x_{i+3},v\}$, $\{x_{i+4}\}$ give a $C_5$-minor in $(G,x_1x_2x_3x_4x_5)$, a contradiction. So, by symmetry, we may assume $N_G(u)=\{x_1,x_2,x_4\}$ and, hence, $x_3,x_5\in N(v)$. 

        We claim that $vx_4\notin E(G)$ and $\{v,x_4\}\not \subseteq N_G(C)$ for every component $C$ of $G-(X\cup \{u,v,a\})$. Otherwise, let $P$ be a $v$-$x_4$ path in $G[V(C)\cup \{v_1,x_4\}]$. Now $\{x_1\}, \{x_2,u\}, \{x_3,v\}$, $V(P-v), \{x_5,a\}$ give an ordered $C_5$-minor in $(G,x_1x_2x_3x_4x_5)$, a contradiction.

        Suppose $G-(X\cup \{u,v,a\})$ has a component $C$ such that $\{x_3,x_5,u\}\subseteq N_G(C)$. Then, for $i\in [2]$,  $vx_i\not\in E(G)$ and $\{v,x_i\}\not\subseteq N_G(D)$ for all  components $D$ of $G-(X\cup \{u,v,a\})$ with $D\ne C$;  since otherwise, $G[V(D)\cup \{v,x_i\}]$ contains a $v$-$x_i$ path, say $Q$, and  $V(Q), \{x_{2},u\}, \{x_{3}\}\cup V(C), \{x_{4},a\},\{x_{5}\}$ (when $Q$ is a $v$-$x_1$ path) or  $\{x_{1},u\},V(Q), \{x_3\}, \{x_{4},a\}, \{x_{5}\}\cup V(C)$ (when $Q$ is a $v$-$x_2$ path) would give a $C_5$-minor in $(G,x_1x_2x_3x_4x_5)$, a contradiction. We choose the partition $V(G')\setminus (X\cup \{a,b\})$ in Claim 3 to first minimize $C_{2,4}\cup C_{4,1}$ and then maximize 
        $C_{3,5}$.  Then $ N_G(C_{2,4})\subseteq \{u,a,x_2,x_4\}$, $N_G(C_{4,1})\subseteq \{u,a,x_1,x_4\}$, $N_G(C_{1,3})\subseteq \{x_1,x_3,u,a\}$, and $N_G(C_{5,2})\subseteq\{x_2,x_5,u,a\}$. Thus,  $N_G(C_{3,5}\cup\{v\})\subseteq\{u,a,x_3,x_5\}$. Now, since $(G,X)$ satisfies (M2), $\rho_G(C_{i,i+2})\le 5|C_{i,i+2}|$ for $i=[5]\setminus \{3\}$ and $\rho_G(C_{3,5}\cup \{v\})\le 5|C_{3,5}\cup \{v\}|$. Hence, 
        \begin{align*}
            \rho_G(V(G)\setminus X)&=\sum_{i\in [5]\setminus \{3\}}\rho_G(C_{i,i+2}) +\rho_G(C_{3,5}\cup \{v\})+e(\{a,u\},X)+e(u,a)\\
            &\le 5|V(G)\setminus (X\cup \{u,a\})|+8+1\\
            &=5|V(G)\setminus X|-1.
        \end{align*}
        But this shows that $(G,X)$ does not satisfy (M1), a contradiction.

        Therefore, we may assume that $\{x_3,x_5,u\}\not\subseteq N_G(C)$ for all components $C$ of $G-(X\cup \{u,v,a\})$. Thus, we may choose the partition $V(G')\setminus (X\cup \{a,b\})$ in Claim 3 to minimize $C_{2,4}\cup C_{4,1}\cup C_{3,5}$. Then $N_G(C_{2,4})\subseteq \{u,a,x_2,x_4\}$, $N_G(C_{4,1})\subseteq \{u,a,x_1,x_4\}$, and $N_G(C_{3,5})\subseteq \{v,a,x_3,x_5\}$. Hence, $\rho_G(C_{i,i+2})\le 5|C_{i,i+2}|$ for $i=2,3,4$. Thus, 
        \begin{align*}
             &\quad\quad \rho_G(V(G)\setminus X)-5|V(G)\setminus X|-1\\
             &=\sum_{i\in[5]}(\rho_G(C_{i,i+2})-5|C_{i,i+2}|)+e_G(\{u,v,a\},X)+|E(G[\{u,v,a\}])|-5\cdot 3-1\\
            &\le (\rho_G(C_{1,3})-5|C_{1,3}|)+(\rho_G(C_{5,2})-5|C_{5,2}|)+|E(G)\cap\{vx_1,vx_2,ua,va\}|-5, 
        \end{align*}        
        since $\{ax_1,ax_2,ax_3,ax_4,ax_5,uv,ux_1,ux_2,ux_4,vx_3,vx_5\}\subseteq E(G)$ and $ux_3,ux_5,vx_4\not\in E(G)$. Since $(G,X)$ satisfies (M1), $\rho_G(V(G)\setminus X)-5|V(G)\setminus X|-1\ge 0$. Hence, by Lemma \ref{5cut},  $|E(G)\cap \{vx_1,vx_2,ua,va\}|\in \{3, 4\}$.  
        
        Suppose $|E(G)\cap \{vx_1,vx_2,ua,va\}|=3$. Then $\rho_G(C_{1,3})-5|C_{1,3}|=\rho_G(C_{5,2})-5|C_{5,2}|=1$ and $|N_G(u)\cap N_G(v)\cap \{x_1,x_2\}|\ge 1$. By Lemma~\ref{5cut}, $|N_G(u)\cap N_G(v)\cap C_{1,3}|\ge 2$ and $|N_G(u)\cap N_G(v)\cap C_{5,2}|\ge 2$.   Thus,  $|N_G(u)\cap N_G(v)|\ge 5$, as desired.
        
        So assume $|E(G)\cap \{vx_1,vx_2,ua,va\}|=4$.  Then $a,x_1,x_2\in N_G(u)\cap N_G(v)$, and $\rho_G(C_{1,3})-5|C_{1,3}|=1$ or $\rho_G(C_{5,2})-5|C_{5,2}|=1$. So by Lemma~\ref{5cut},         $|N_G(u)\cap N_G(v)\cap C_{3,5}|\ge 2$ or $|N_G(u)\cap N_G(v)\cap C_{5,2}|\ge 2$. Again, $|N_G(u)\cap N_G(v)|\ge 5$, as desired.
	\end{proof} 
	
	\begin{lemma} \label{2.4}
		There exists a vertex $x^*\in V(G)\backslash X$ with $|N(x^*)|<10$.
	\end{lemma}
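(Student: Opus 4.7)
The proof is by contradiction: suppose every $v \in V(G) \setminus X$ satisfies $|N(v)| \ge 10$. The overall goal is to exhibit either a $K_5$-subgraph in $G$, contradicting Corollary~\ref{2.2}, or a rigid separation of $(G,X)$ of order at most $5$, contradicting Lemma~\ref{2.1}.

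I would start by fixing an arbitrary $v^* \in V(G) \setminus X$ (which exists by (M1)) and analyzing its link $L := G[N(v^*)]$. Two structural observations follow immediately. First, $L$ is $K_4$-free, since otherwise a $K_4$ in $L$ together with $v^*$ would yield a $K_5$ in $G$, contradicting Corollary~\ref{2.2}. Second, by Lemma~\ref{2.3} applied to each edge $uv^*$ with $u \in N(v^*) \setminus X$, we have $|N(u) \cap N(v^*)| \ge 5$, so every such $u$ satisfies $d_L(u) \ge 5$. Combined with $|V(L)| \ge 10$ and $|V(L) \setminus X| \ge |N(v^*)| - |X| \ge 5$, this says that $L$ is a $K_4$-free graph containing at least five vertices of $L$-degree at least five.

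Starting from some $u_1 \in N(v^*) \setminus X$, I would try to build a $K_4$ in $L$ by iteratively choosing common neighbors inside $L \setminus X$. Using Lemma~\ref{exception} to ensure $|N(v^*) \cap X| \le |X| - 1$ for a suitable choice of $v^*$, one first picks $u_2 \in N(u_1) \cap N(v^*) \setminus X$, then applies Lemma~\ref{2.3} to $u_1 u_2$ to obtain $|N(u_1) \cap N(u_2)| \ge 5$, and looks for $u_3 \in N(u_1) \cap N(u_2) \cap N(v^*) \setminus X$; continuing to $u_4$ one hopes to produce $\{u_1, u_2, u_3, u_4\}$ forming a clique in $L$, which together with $v^*$ yields the $K_5$ in $G$.

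The main obstacle is that this iteration can break down when too many of the forced common neighbors lie either outside $N(v^*)$ or inside $X$, a scenario which is hardest to rule out when $|X| = 5$ and $N(v^*)$ meets $X$ densely. In that boundary case, I would instead leverage the heavy edge density between $N(v^*)$ and $X$ to construct a separation $(A,B)$ of $(G,X)$ of order at most $5$: by Lemma~\ref{5cut}, any such order-$5$ separation saturating the bound $\rho(B\setminus A) = 5|B\setminus A|+1$ forces $(G[B], A \cap B)$ to satisfy the conclusion of Theorem~\ref{main}, which can then be combined with the structure on the other side to produce either a rigid separation (contradicting Lemma~\ref{2.1}) or to propagate the conclusion back to $(G,X)$ itself, contradicting~(c). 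The delicate bookkeeping in this boundary case—while adapting the argument to the cases $|X| < 5$, where Lemma~\ref{5cut} is not available and one must rely purely on (M2) and the clique-building step—is what I expect to be the technical heart of the proof.
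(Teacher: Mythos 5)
Your proposal takes a fundamentally different route from the paper's and, as written, has a genuine gap at exactly the point you flag as the ``main obstacle.'' The iteration you describe---pick $u_2 \in N(u_1)\cap N(v^*)\setminus X$, then $u_3 \in N(u_1)\cap N(u_2)\cap N(v^*)\setminus X$, then $u_4$---requires the successive common neighborhoods to keep intersecting $N(v^*)\setminus X$, and nothing in Lemma~\ref{2.3} forces that. Indeed, a $K_4$-free graph on $10$ vertices can have minimum degree $5$ or more (the Tur\'an graph $T(10,3)$ is $7$-regular and $K_4$-free), so the structural facts you extract about the link $L = G[N(v^*)]$---$K_4$-freeness, at least five vertices of $L$-degree $\ge 5$, $|V(L)|\ge 10$---are jointly satisfiable and yield no contradiction on their own. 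The fallback plan for the boundary case (produce an order-$5$ separation saturating Lemma~\ref{5cut}) is stated as a hope rather than a construction; you never specify where $(A,B)$ comes from, and it is not clear how the ``heavy edge density between $N(v^*)$ and $X$'' produces it. So the argument does not close.

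The paper proves the lemma by a global counting argument that avoids clique-building entirely. The key step is to establish the bound $\rho(V(G)\setminus X)\le 5|V(G)\setminus X|+2$: one deletes an arbitrary edge $uv\in E(G)\setminus E(G[X])$, observes that the resulting pair $(G-uv,X)$ cannot be $5$-massed (by minimality of $(G,X)$), so it must violate (M2); pushing the resulting small-order separation back into $G$ and using minimality again yields a rigid separation of $(G,X)$, contradicting Lemma~\ref{2.1}. With that bound in hand, the proof is a two-line degree sum: $\sum_v |N(v)| = 2\rho(V(G)\setminus X) + 2|E(G[X])| \le 10|V(G)\setminus X| + 4 + 2|E(G[X])|$, while the assumption $|N(v)|\ge 10$ for all $v\in V(G)\setminus X$ gives $\sum_v |N(v)| \ge 10|V(G)\setminus X| + \sum_{x\in X}|N(x)|$; combining forces $|E(X,V(G)\setminus X)|\le 4$, so some $x\in X$ has $N(x)\subseteq X$, and then $(X, V(G)\setminus\{x\})$ is a separation violating (M2). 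If you want to salvage your approach, the missing ingredient is precisely this global $\rho$-bound; the local link structure at a single vertex is not enough.
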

	\begin{proof}
        First, we claim that $\rho(V(G)\setminus X)\le 5|V(G)\setminus X|+2$. For, otherwise, let $uv\in E(G)\setminus E(G[X])$ and $G'=G-uv$; then  $\rho_{G'}(V(G')\setminus X)=\rho_G(V(G)\setminus  X)-1\ge  5 |V(G)\setminus X|+3-1=5|V(G')\setminus X|+2$.  Thus, $(G', X)$ does not satisfy the conclusion of Theorem \ref{main}. Note that $(G',X)$ is not cycle-linked as $(G,X)$ is not. Therefore, by the choice of $(G,X)$, $(G',X)$ is not 5-massed; and since $(G',X)$ satisfies (M1), it violates (M2). So, there exists a separation $(A,B)$ of $(G',X)$ of order at most $|X|-1$, such that $\rho_{G'}(B\setminus A)> 5|B\setminus A|$. If $\{u,v\}\subseteq A$ or $\{u,v\}\subseteq  B$, then $(A,B)$ is also a separation of $(G,X)$ and $\rho_{G}(B\setminus A)> 5|B\setminus A|$, contradicting the assumption that $(G,X)$ satisfies (M2). Thus, we may assume that $u\in A\backslash B$ and $v\in B\backslash A$. Let $A':=A$ and $B':=B\cup \{u\}$. Then $(A',B')$ is a separation of $(G,X)$ of order at most $|X|$. Now consider the pair $(G'[B'],A'\cap B')$. 
		Note that $\rho_{G[B']}(B'\setminus A') = \rho_{G'}(B\setminus A) +1 \geq 5 |B\setminus A|+2=5 |B'\setminus A'|+2$, so $(G'[B'],A'\cap B')$ satisfies (M1) but does not satisfy the conclusion of Theorem \ref{main}. Moreover,  for each separation $(A'',B'')$ of $(G'[B'],A'\cap B')$ of order less than $|A'\cap B'|$, $(A''\cup A,B'')$ is a separation of $(G,X)$ of the same order; so $\rho_{G'[B']}(B''\setminus A'')=\rho_G(B''\setminus (A''\cup A))\le 5|B''\setminus (A''\cup A)| =5|B''\setminus A''|$. Thus  $(G'[B'],A'\cap B')$ satisfies (M2) and hence is $5$-massed. By our choice of $(G,X)$, $(G'[B'],A'\cap B')$ is cycle-linked. Thus, $(A',B')$ is a rigid separation in $(G,X)$, a contradiction to Lemma \ref{2.1}.
        
        Therefore, we have $$\sum_{v\in V(G)} |N(v)| = 2|E(G)| =2\rho(V(G)\setminus X) +2|E(G[X])| \leq 10|V(G)\setminus X| +4 +2|E(G[X])|.$$
        
        Now, suppose for a contradiction that the assertion of Lemma \ref{2.4} is false. Then, for every vertex $v\in V(G)\backslash X$, we have $|N(v)|\geq 10$. So $$\sum_{v\in V(G)} |N(v)| = \sum_{v\in V(G)-X} |N(v)| +\sum_{x\in X} |N(x)|\geq 10 |V(G)\setminus X| +\sum_{x\in X} |N(x)|.$$       
        Combining the above two inequalities, we have $$ 2|E(G[X])|+ |E(X,V(G)\setminus X)| = \sum_{x\in X} |N(x)| \leq 4 +2|E(G[X])|.$$	
		Thus, $|E(X,V(G)\setminus X)|\leq 4$, which means that there exists a vertex $x\in X$ such that $N(x)\cap (V(G)\setminus X) = \emptyset$. But then $(X,V(G)\backslash \{x\})$ is a separation of $(G,X)$ violating (M2), a contradiction.
	\end{proof}

    \section{Small dense subgraphs} \label{sec3}

In this section, we complete the proof of Theorem~\ref{main}. Suppose Theorem \ref{main} does not hold. Then there exists a graph $G$ and a set $X\subseteq V(G)$ such that $(G,X)$ is 5-massed but not cycle-linked, the conclusion of Theorem~\ref{main} fails, and,  subject to these conditions, $|V(G)|+|E(G)|$ is minimal.

 By Corollary~\ref{2.3} we have $K_5\not\subseteq G$, and by Lemma~\ref{2.4} there exists a vertex $a\in V(G)\setminus X$ such that $|N(a)|<10$. Let $H:=G[N[a]]$. 
\begin{lemma}\label{3.0}
    $K_4\not\subseteq H-a$ and $7\le |V(H)|\le 10$, if $v\in V(H)\setminus X$ then $|N_H(v)|\ge 6$, and if $x\in V(H)\cap X$ then $|N_H(x)\setminus X|\ge 2$ and $|N_H(x)\setminus X|+|V(H)\cap X|\ge 7$.
\end{lemma}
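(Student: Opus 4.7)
The plan is to derive each of the four assertions by combining Corollary~\ref{2.2} (that $G$ has no $K_5$) with careful applications of the two cases of Lemma~\ref{2.3} to suitably chosen edges inside $N[a]$; once the right edge is selected, each assertion reduces to routine bookkeeping.

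For the first two claims: if $K_4 \subseteq H - a$, then since $a$ is adjacent to every vertex of $H - a = N(a)$, $\{a\} \cup V(K_4)$ induces a $K_5$ in $G$, contradicting Corollary~\ref{2.2}. The upper bound $|V(H)| \le 10$ is immediate from $|V(H)| = |N(a)| + 1$ and $|N(a)| < 10$ (the choice of $a$ from Lemma~\ref{2.4}).

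For the degree bounds, take first $v \in V(H) \setminus X$ with $v \ne a$. Then $v \in N(a) \setminus X$, so the edge $av$ has both endpoints outside $X$; the first case of Lemma~\ref{2.3} gives $|N(a) \cap N(v)| \ge 5$, and since $a \in N_H(v)$ while $N(a) \cap N(v) \subseteq N_H(v) \setminus \{a\}$, we conclude $|N_H(v)| \ge 6$. The case $v = a$, which amounts to the lower bound $|V(H)| \ge 7$, requires a little more care. If $a$ has a neighbor $u \notin X$, the previous argument applied to $au$ yields $|N(a)| \ge 1 + |N(a) \cap N(u)| \ge 6$; if instead $N(a) \subseteq X$ but $a$ has at least one neighbor $x \in X$, then the second case of Lemma~\ref{2.3} applied to $ax$ forces $|N(a) \cap X| \ge 6$, contradicting $|X| \le 5$. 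The only remaining possibility, that $a$ is isolated, is ruled out by minimality: $(G - a, X)$ would then be a smaller 5-massed pair that is not cycle-linked, so by the choice of $(G, X)$ it would satisfy the conclusion of Theorem~\ref{main}, yielding $\rho_G(V(G) \setminus X) = 5|V(G) \setminus X| - 4$ and contradicting (M1).

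Finally, for $x \in V(H) \cap X$: since $a \notin X$ we have $V(H) \cap X = N(a) \cap X$, and $ax \in E(G)$ is an edge with exactly one endpoint in $X$. The second case of Lemma~\ref{2.3} applied to $ax$ gives $|(N(a) \cap N(x)) \setminus X| + |N(a) \cap X| \ge 6$. Writing $N_H(x) \setminus X = \bigl((N(x) \cap N(a)) \setminus X\bigr) \cup \{a\}$ as a disjoint union (since $a \notin X$) converts this into $|N_H(x) \setminus X| + |V(H) \cap X| \ge 7$, and then $|N_H(x) \setminus X| \ge 7 - |V(H) \cap X| \ge 7 - |X| \ge 2$. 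The only genuinely nonroutine step in the whole lemma is excluding the isolated-vertex case for $a$ in the proof of $|V(H)| \ge 7$, where Lemma~\ref{2.3} cannot be invoked directly and one must fall back on the minimality of $(G, X)$ together with (M1).
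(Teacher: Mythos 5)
Your proof is correct and follows essentially the same route as the paper's: derive $K_4\not\subseteq H-a$ from Corollary~\ref{2.2}, get $|V(H)|\le 10$ from the choice of $a$, and obtain all degree bounds by applying the two cases of Lemma~\ref{2.3} to edges $av$ with $v\in N(a)$. The only (minor) difference is in establishing $|V(H)|\ge 7$: the paper first rules out $N(a)\subseteq X$ in one stroke by observing that $(G-a,X)$ would contradict the minimal choice of $(G,X)$, whereas you split that case further, handling $N(a)\subseteq X$ with $N(a)\neq\emptyset$ via the second case of Lemma~\ref{2.3} and invoking minimality only when $a$ is isolated — both are valid, and yours is arguably a little more self-contained.
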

\begin{proof}
     Since $K_5\not\subseteq G$, $K_4\not\subseteq H-a$. Since $|N_G(a)|<10$, $|V(H)|\le 10$. 
     Note that $|V(H)\setminus X|\ge 2$, since otherwise, $N_G(a)\subseteq X$ and $(G-a,X)$ contradicts the choice of $(G,X)$. 
     
    Suppose $x\in V(H)\cap X$. Then $(|N_H(x)\setminus X|-1)+|V(H)\cap X|=|(N_G(a)\cap N_G(x))\backslash X| + |N(\{a,x\}\setminus X)\cap X| \geq 6$ (by Lemma \ref{2.3}). This implies $|N_H(x)\setminus X|\ge 2$ and $|N_H(x)\setminus X|+|V(H)\cap X|\ge 7$. 
    
    Now suppose $v\in V(H)\setminus (X\cup\{a\})$ (which exists, since $N_G(a)\not\subseteq X$). By Lemma~\ref{2.3}, 
     we have $|N_H(v)|\ge |N_G(a)\cap N_G(v)|+|\{a\}|\ge 5+1=6$ and $|N_H(a)|\ge |N_G(a)\cap N(v)|+|\{v\}|\ge 5+1=6$. Hence,  $|V(H)|\ge 7$.
\end{proof}

   	\begin{lemma} \label{3.1}
		For every $X'\subseteq V(H)\setminus \{a\}$ with $2\le |X'|\le 4$ and $X\cap V(H)\subseteq X'$, $(H,X')$ is cycle-linked.
	\end{lemma}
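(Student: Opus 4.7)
The plan is a direct case analysis on $|X'|\in\{2,3,4\}$, using Lemma~\ref{3.0} (universality of $a$ in $H$, $K_4$-freeness of $H-a$, and the degree bounds) to explicitly construct the required disjoint connected sets. The case $|X'|=2$ is immediate: if $X'=\{y_1,y_2\}$, the path $y_1 a y_2$ witnesses cycle-linkage.

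For $|X'|=3$, write $X'=\{y_1,y_2,y_3\}$; since $C_3$ has only one cyclic ordering, one rooted $K_3$-minor suffices. I would try the branch sets $X_1=\{y_1,a\},\ X_2=\{y_2\},\ X_3=\{y_3\}$; the universality of $a$ handles the $X_1$--$X_2$ and $X_1$--$X_3$ edges, leaving only an $X_2$--$X_3$ connection to arrange. It therefore suffices to find some pair $\{y_i,y_j\}\subseteq X'$ (letting the third vertex $y_k$ play the role of $y_1$) that is either adjacent in $H$ or shares a neighbor inside $V(H)\setminus\{a,y_k\}$; if such a pair exists, we expand the corresponding branch by the common neighbor and we are done. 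If no pair works then $y_1,y_2,y_3$ are pairwise nonadjacent and their neighborhoods in $V(H)\setminus(X'\cup\{a\})$ are pairwise disjoint. A counting argument using $|V(H)|\le 10$ together with $|N_H(y_i)\setminus\{a\}|\ge 5$ for $y_i\notin X$ (and the alternative bound $|N_H(y_i)\setminus X|+|V(H)\cap X|\ge 7$ for $y_i\in X$) then yields a contradiction.

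For $|X'|=4$ I would handle each of the three inequivalent cyclic orderings in turn. Fix an ordering $y_1y_2y_3y_4$ and set $X_1=\{y_1,a\}$, $X_j=\{y_j\}$ for $j\ge 2$; universality of $a$ gives the edges $X_1X_2$ and $X_1X_4$ for free, so the task reduces to realizing $X_2X_3$ and $X_3X_4$ simultaneously. Each required adjacency is either already an edge of $H$ or can be obtained by absorbing into an appropriate branch a vertex $v\in V(H)\setminus(X'\cup\{a\})$ adjacent to both endpoints of the missing edge, the two connector vertices being chosen disjointly when two are needed. The existence of such connectors follows from the degree lower bounds and the size of $V(H)\setminus(X'\cup\{a\})$, while $K_4$-freeness of $H-a$ rules out obstructive configurations where every candidate connector would force a forbidden $K_4$; when no valid choice exists for some ordering, a counting argument analogous to the $|X'|=3$ case, refined by the stronger degree bound $\ge 6$ in $H$ for vertices outside $X$, contradicts Lemma~\ref{3.0}. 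The main obstacle is exactly this last step: the two connector vertices must avoid each other and the branches they augment, and when several of the $y_i$ lie in $X$ the degree conditions weaken sharply, so one must subdivide carefully on $|V(H)\cap X|$ and on which of $y_2y_3,y_3y_4$ is the nonedge.
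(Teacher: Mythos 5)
Your cases $|X'|=2$ and $|X'|=3$ work. The $|X'|=2$ case is the same as the paper's. For $|X'|=3$ you take a somewhat different route: the paper asks whether two of the roots lie in the same component of $H-a-y_k$ (and if not, counts per component), whereas you only look for a common neighbor or an edge between a pair; but the "no pair works" scenario does force the neighborhoods $N_H(y_i)\cap(V(H)\setminus(X'\cup\{a\}))$ to be pairwise disjoint, and the degree bounds of Lemma~\ref{3.0} (using $|V(H)\cap X|\le 3$ so that $|N_H(y_i)\setminus X|\ge 4$ when $y_i\in X$) do overwhelm $|V(H)\setminus(X'\cup\{a\})|\le 6$, so your counting closes that case.

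The $|X'|=4$ case has a genuine gap. You reduce to realizing the two edges $X_2X_3$ and $X_3X_4$ via connector vertices, and then assert that ``the existence of such connectors follows from the degree lower bounds,'' while simultaneously acknowledging that this is ``the main obstacle.'' That assertion is not justified: when two connectors are needed they must be distinct and must both be adjacent to $y_3$, and the degree bounds do not obviously guarantee two such vertices in $U=V(H)\setminus(X'\cup\{a\})$ (for instance, both common neighbors of $y_2,y_3$ could lie in $\{a,y_1,y_4\}$). Moreover the fallback ``counting argument analogous to the $|X'|=3$ case'' is not spelled out, and the failure scenario here is not a clean disjoint-neighborhood condition as it was for $|X'|=3$; it involves several simultaneous constraints over several cyclic orderings, so it is not clear what inequality one would be summing. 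The paper avoids all of this with a dichotomy you miss: $|U|\le 5$, and either some $u\in U$ has at least three neighbors in $X'$ (necessarily a consecutive triple on the $4$-cycle, yielding $\{x_1'\},\{x_2',u\},\{x_3'\},\{x_4',a\}$ directly), or every $u\in U$ has at most two neighbors in $X'$, hence by $|N_H(u)|\ge 6$ at least three neighbors in $U$, so $H[U]$ has minimum degree $\ge 3$ on at most five vertices and is therefore connected; then $\{x_1'\},\{x_2',a\},\{x_3'\},\{x_4'\}\cup U$ works, using Lemma~\ref{3.0} to guarantee $N_H(x_i')\cap U\ne\emptyset$. You would need to supply an argument of comparable strength to close your gap.
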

	\begin{proof} 
		Fix an arbitrary ordering of the vertices in $X'$, say $x_1',\ldots, x_t'$, where $t=|X'|\in \{2,3,4\}$. We will show that $(H, x_1'\ldots x_t')$ has a $C_t$-minor if $t\in\{3,4\}$, and $H$ contains a $x_1'$-$x_2'$ path if $t=2$.  Let $U=V(H)\setminus (X'\cup \{a\})$.
   
        If $t=2$ then $x_1,a,x_2$ give the desired path in $(H, X')$. 
        
        Now suppose $t=3$. If there is a permutation $ijk$ of $[3]$ such that $H-\{a,x_k'\}$ contains an $x_i'$-$x_j'$ path $P$, 
        then $\{V(P)\setminus \{x_j'\}, \{x_j'\}, \{x_k',a\}$ give the desired $C_3$-minor in $(H, x_1'x_2' x_3')$.  So we may assume $x_1',x_2',x_3'$ belong to three different components of $H-a$, say $C_1,C_2,C_3$, respectively. Now by Lemma~\ref{3.0},  for each $i\in[3]$, $|N_{C_i}(x_i')|=|N_H(x_i')\setminus\{a\}|\ge 2-1=1$; so let $y_i\in N_{C_i}(x_i')$. By Lemma~\ref{3.0} again, $|N_{C_i}(y_i)|=|N_H(y_i)\setminus\{a\}|\ge 6-1=5$, which implies $|V(C_i)|\ge 6$. But then $|V(H)|>|C_1|+|C_2|+|C_3|\ge 18,$ a contradiction.

        Therefore, we may assume $t=4$. Then $|U|\le 5$ since $|V(H)|\le 10$. If there exists $u\in U$ such that $|N_H(u)\cap X'|\ge 3$ then we may let  $x_1',x_{2}',x_{3}'\in N(u)\cap X'$; now   $x_1',\{x_{2}',u\}, \{x_{3}'\},\{x_{4}',a\}$ give the desired $C_4$-minor in $(H,x_1'x_2'x_3'x_4')$. So we may assume that $|N_H(u)\cap X'|\le 2$ for all $u\in U$. Then $|N_H(u)\cap U|\ge 3$ for all $u\in U$ by Lemma~\ref{3.0}. Hence, the subgraph of $H$ induced by $U$ has minimum degree at least 3; so it must be connected as it has at most 5 vertices. Note that for each $i\in [4]$, $N(x_i')\cap U\ne \emptyset$ by Lemma~\ref{3.0}. Hence, $\{x_1'\},\{x_2',a\},\{x_3'\},\{x_4'\}\cup U$ give the desired $C_4$-minor in $(H,x_1'x_2'x_3'x_4')$. 
    \end{proof}

    \begin{lemma}\label{3.2}
		For every subset $X'\subseteq V(H)\setminus \{a\}$ such that $|X'|=5$ and $X\cap V(H)\subseteq X'$. $(H,X')$ is cycle-linked.
	\end{lemma}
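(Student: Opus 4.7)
Fix an arbitrary ordering $x_1',\ldots,x_5'$ of $X'$; the aim is to build a $C_5$-minor of $(H, x_1'x_2'x_3'x_4'x_5')$. Let $U := V(H) \setminus (X' \cup \{a\})$, so $|U| = |V(H)|-6 \in \{1,2,3,4\}$ by Lemma~\ref{3.0}. Throughout I exploit the fact that $a$ is adjacent to every vertex of $H - a$ (because $H = G[N[a]]$), so placing $a$ into any single branch set $X_i$ automatically supplies the two cyclic edges $X_{i-1}X_i$ and $X_iX_{i+1}$.

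The central construction template is to put $X_i := \{x_i', a\}$ and feed a connected piece $U' \subseteq U$ into another branch set, say $X_{i+2} := \{x_{i+2}'\} \cup U'$, leaving the three remaining branch sets as singletons $\{x_{i+1}'\}$, $\{x_{i+3}'\}$, $\{x_{i+4}'\}$. If $U'$ contains neighbours in $H$ of both $x_{i+1}'$ and $x_{i+3}'$, this covers four of the five cyclic adjacencies, leaving only the requirement $x_{i+3}'x_{i+4}' \in E(H)$. Varying $i \in [5]$ (and symmetrically placing $U'$ into $X_{i+3}$ instead of $X_{i+2}$) reduces the task to producing, for some $k \in [5]$, a cyclic consecutive edge $x_k'x_{k+1}' \in E(H)$.

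I then split into cases on $|U|$. When $|U| \geq 2$, the degree bounds of Lemma~\ref{3.0} combined with $K_4 \not\subseteq H - a$ force $H[U]$ to be connected and every $x_j'$ to have a neighbour in $U$; arguing as in the $t = 4$ case of Lemma~\ref{3.1}, this provides enough flexibility (including the ability to route through longer paths inside $U$) to obtain a $C_5$-minor regardless of whether a consecutive $x$-$x$ edge is present. The pinch point is $|U| = 1$, where $U = \{u\}$ and Lemma~\ref{3.0} forces $u$ to be adjacent to every element of $X'$; the template above then delivers the required $C_5$-minor as soon as \emph{some} $x_k'x_{k+1}'$ is an edge of $H$.

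The main obstacle is the residual \emph{pentagram} scenario: $|U|=1$ with no cyclic consecutive edge in $H[X']$. In this setting the degree condition $|N_H(x_i')| \geq 6$ for $x_i' \notin X$ forces every $x_i'$ into $X$, giving $X' = X$ and $V(H) = X \cup \{a, u\}$. I finish by showing that this scenario contradicts the minimality of $(G, X)$: applying Lemma~\ref{5cut} and \hyperref[M2]{(M2)} to each component of $G - (X \cup \{a, u\})$ (such components must have $|N_G(C)| \leq 5$, else $C$ itself supplies enough extra vertices to build a $C_5$-minor on $x_1'\ldots x_5'$), together with \hyperref[M1]{(M1)} and the count $\rho_H(\{a,u\}) = 11$, pins down $\rho(V(G)\setminus X) = 5|V(G)\setminus X| + 1$ and verifies that the pair $(a, u)$ satisfies every clause of the conclusion of Theorem~\ref{main}, contradicting the choice of $(G, X)$ as a counterexample.
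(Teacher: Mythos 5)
Your handling of $|U|=1$ is sound: you correctly note that $u$ is adjacent to all of $X'$, that a single consecutive edge $x_k'x_{k+1}'$ in $H[X']$ gives the cycle, and that otherwise the degree bound of Lemma~\ref{3.0} forces $X'=X$. However, you then rebuild from scratch the argument that the pentagram configuration with $a,u\in\bigcap_{x\in X}N(x)$ forces the conclusion of Theorem~\ref{main} to hold; this is literally the content of Lemma~\ref{exception}, which the paper cites at this point, so you could simply invoke it rather than rerun the mass-counting.

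The real gap is the case $|U|\ge 2$, which you dispatch with ``arguing as in the $t=4$ case of Lemma~\ref{3.1}\ldots this provides enough flexibility.'' That analogy does not transfer. In the $t=4$ argument of Lemma~\ref{3.1}, one root-vertex pair $\{x_2',a\}$ absorbs two cyclic edges for free and a single connected chunk $\{x_4'\}\cup U$ absorbs two more, leaving only the edge $x_1'$--$x_2'$, again supplied by $a$. With $t=5$ there are five cyclic adjacencies and only one $a$; after placing $a$ and one connected piece of $U$ you still have one adjacency unaccounted for, and unlike in $t=4$ there is no second ``free'' edge from $a$. Your template itself makes this explicit: it reduces to producing a consecutive edge $x_k'x_{k+1}'\in E(H)$, which need not exist. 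The paper's actual proof of this case is a nontrivial case analysis (choosing $u_1,u_2$ of minimum $U$-degree, showing $d_2=2$ forces $H[U\setminus\{u_1\}]$ connected, then splitting on $|N_H(u_1)\cap X'|\in\{3,4\}$ and the position of $N_H(u_2)\cap X'$, repeatedly exploiting $K_4\not\subseteq H-a$ and $|U|\le 4$), and the successful branch sets often split $U$ into \emph{two} disjoint connected pieces feeding two different $X_i$'s, or use $a$ together with two separate $U$-vertices attached to three distinct roots. None of this is forced by, or even suggested by, the $t=4$ argument, so the claim that it ``provides enough flexibility'' is an assertion, not a proof. You would need to supply the full case analysis here; the hard content of the lemma is precisely this case.
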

    \begin{proof} 
    	Fix an arbitrary ordering of the  vertices in $X'$, say $x_1',x_2',\ldots,x_5'$. We will show that $(H,x_1'x_2'x_3'x_4'x_5')$ has a $C_5$-minor.     Set $U:=V(H)\setminus (X'\cup \{a\})$. Since $7\le |V(H)|\le 10$, we have $1\le |U|\le 4$.  
        \medskip
        
	    {\it Case} 1. $|U|=1$. 
        
        Let $u\in U$. Then, by Lemma~\ref{3.0}, $X'\subseteq N_H(u)$. If, for some $i\in [5]$, $x_i'x_{i+1}'\in E(G)$, then  $\{x_{i-2}'\},\{x_{i-1}',a\}, \{x_i'\}, \{x_{i+1}'\},\{x_{i+2}',u\}$ give a $C_5$-minor in $(H,x_1'x_2'x_3'x_4'x_5')$. So we may assume $x_i'x_{i+1}'\notin E(G)$ for all $i\in [5]$. Then $X'=X$;  for, otherwise, there exists $i\in [5]$ such that $x_i'\in X'\setminus X$ and, hence, $|N_H(x_i')|\le 4$, contradicting Lemma~\ref{3.0}. 

        Therefore, $|X|=|X'|=5$, and $a,u\in \bigcap_{x\in X}N(x)$, contradicting Lemma \ref{exception}.

        \medskip 

        {\it Case 2}.  $|U|\ge 2$. 
        
        Let $u_1\in U$ with $d_1:=|N_H(u_1)\cap U|$ minimum, and let $u_2\in U\setminus \{u_1\}$ with $d_2:=|N_H(u_2)\cap U|$ minimum.  
	
	    Suppose $d_2\le 1$. Then, by Lemma~\ref{3.0}, $|N_H(u_i)\cap X'|\ge 4$ for $i\in [2]$. Without loss of generality, assume that $\{x_1',x_2',x_3',x_4'\}\subseteq N_H(u_1)$. Note that $|N_H(u_2)\cap \{x_1',x_2',x_3',x_4'\}|\ge 3$; so  we may further assume without loss of generality that $x_3',x_4'\in N_H(u_2)$. Now $\{x_1'\}$, $\{x_2',u_1\}$, $\{x_3',u_2\}$,$\{x_4'\},\{x_5',a\}$ give a $C_5$-minor in $(H,x_1'x_2'x_3'x_4'x_5')$. 

    	Hence, we may assume $d_2\ge 2$. This implies that $U\setminus \{u_1\}$ induces a connected subgraph of $H$. Indeed, $d_2=2$, since $K_4\not\subseteq H-a$ by Lemma~\ref{3.0}. So for $i\in [2]$, we have $|N_H(u_i)\cap X'|\ge 3$. 
    
    	\medskip 
        {\it Subcase} 2.1.  $|N_H(u_1)\cap X'|\ge 4$. 
        
        Without loss of generality, let $x_1',x_2',x_3',x_4'\in N_H(u_1)$. If $x_1',x_2'\in N_H(u_2)$ or $x_3',x_4'\in N_H(u_2)$ then, by symmetry, let $x_3',x_4'\in N(u_2)$; now $\{x_1'\}$, $\{x_2',u_1\}$, $\{x_3',u_2\}$,$\{x_4'\},\{x_5',a\}$ give the desired $C_5$-minor in $(H,x_1'x_2'x_3'x_4'x_5')$. If $x_1',x_5'\in N_H(u_2)$ or $x_4',x_5'\in N_H(u_2)$ then, by symmetry, let $x_1',x_5'\in N_H(u_2)$; now  $\{x_1',u_2\}$, $\{x_2',u_1\}$, $\{x_3'\}$,$\{x_4',a\},\{x_5'\}$ give the desired $C_5$-minor in $(H,x_1'x_2'x_3'x_4'x_5')$. 
        Hence, we may assume that $N_H(u_2)=\{x_2',x_3',x_5'\}$. Now if $x_5'\in N_H(u_1)$, then  $\{x_1,a\}, \{x_2\}, \{x_3,u_2\},\{x_4,u_1\},\{x_5\}$ give the desired $C_5$-minor; so, we may assume $x_5'\not\in N_H(u_1)$, and hence $d_1\ge 1$ by Lemma \ref{3.0}.

        Suppose there exists $u_3\in U\setminus \{u_1,u_2\}$ such that $u_3\notin N_H(u_1)$. Then $|N_H(u_3)\cap U|=2$ and the argument above showing $N_H(u_2)=\{x_2',x_3',x_5'\}$ also applies to $u_3$. Hence, we may also assume  $N_H(u_3)=\{x_2',x_3',x_5'\}$. 
        Since $d_1\ge 1$ and $d_2=2$, there exists $i\in \{2,3\}$ such that $U-\{u_i\}$ induces a connected subgraph of $H$. Now $\{x_1',u_1\}, \{x_2',u_i\}, \{x_3'\}, \{x_4',a\},\{x_5'\}\cup (U\setminus \{u_1,u_i\})$ give the desired $C_5$-minor in $(H,x_1'x_2'x_3'x_4'x_5')$.  

        Hence, we may assume $U\setminus \{u_1,u_2\}\subseteq  N_H(u_1)$. 
        If $x_5'$ has a neighbor in $U\setminus \{u_1,u_2\}$, then $\{x_1',u_1\}, \{x_2',u_2\}, \{x_3'\}, \{x_4',a\},\{x_5'\}\cup (U\setminus \{u_1,u_2\})$ give the desired $C_5$-minor in $(H, x_1'x_2'x_3'x_4'x_5')$. Otherwise, every vertex in $U\setminus \{u_1,u_2\}$ has at least 3 neighbors in $U$. So $|U|=4$ and $H[U]$ has edge set $\{u_1u_3,u_1u_4,u_2u_3,u_2u_4,u_3u_4\}$. If $x_1'\in N(u_3)$, then $\{x_1',u_3\}, \{x_2',u_2\}, \{x_3'\}, \{x_4',u_1\},\{x_5',a \}$ give the desired $C_5$-minor. If $x_4'\in N(u_3)$, then  $\{x_1',u_1\}, \{x_2'\}, \{x_3',u_2\}, \{x_4',u_3\},\{x_5',a \}$ give the desired $C_5$-minor. So we may assume $N_H(u_3)=N_H(u_4)=\{x_2',x_3'\}$. Now $\{x_1',a\}, \{x_2'\}$, $\{x_3', u_3\}, \{x_4',u_1\},\{x_5',u_2,u_4\}$ give the desired $C_5$-minor.

    	\medskip
        
        {\it Subcase} 2.2. $|N_H(u_1)\cap X'|=3$. 
        
        Then $d_1=2$. Hence, since $K_4\not\subseteq H-a$ and $|U|\le 4$, we may further choose $u_1,u_2$ such that $U\backslash \{u_1,u_2\}\subseteq N_H(u_1)\cap N_H(u_2)$. Then, for each $i\in [2]$, $U\setminus \{u_i\}$ induces a connected subgraph in $H$. If for some $i\in [5]$ we have $x_{i+1}',x_{i+2}',x_{i+3}'\in N_H(u_1)$ then $x_{i+3}',x_{i+4}'\in N_H(U\setminus \{u_1\})$;  hence, $\{x_i',a\}, \{x_{i+1}'\}, \{x_{i+2}',u_1\},\{x_{i+3}'\}\cup (U\setminus \{u_1\}), \{x_{i+4}'\}$ give the desired $C_5$-minor in $(H,x_1'x_2'x_3'x_4'x_5')$. Therefore, we may assume that such $i$ does not exist. Thus, there exist $i,j\in [5]$ such that $N_H(u_1)\cap X'=\{x_i',x_{i+1}',x_{i+3}'\}$ and $N_H(u_2)\cap X'=\{x_j',x_{j+1}',x_{j+3}'\}$. Without loss of generality, we may assume that $i=1$. 
	    
	    If $j=1$ then $\{x_3',x_5'\}\subseteq N_H(U\setminus \{u_1,u_2\})$; now $\{x_1',u_1\}, \{x_2'\},\{x_3',a\},\{x_4'\},\{x_5'\}\cup (U\setminus \{u_1\})$ give  the desired $C_5$-minor $(H,x_1'x_2'x_3'x_4'x_5')$.  If $j=3$ then $\{x_1'\}, \{x_2',u_1\}, \{x_3'\}\cup (U\backslash \{u_1\}), \{x_4'\}, \{x_5',a\}$ give the desired $C_5$-minor in $(H,x_1'x_2'x_3'x_4'x_5')$. If $j=4$ then  $\{x_1',u_1\}$, $\{x_2'\}, \{x_3',a\}, \{x_4'\}, \{x_5'\}\cup (U\backslash \{u_1\})$ give the desired $C_5$-minor $(H,x_1'x_2'x_3'x_4'x_5')$. 

	    Now by symmetry we may assume that $j=2$. Let $u_3\in U\setminus \{u_1,u_2\}$. If $x_5'\in N_H(u_3)$ then $\{x_1',u_1\}, \{x_2',u_2\}$, $\{x_3'\}, \{x_4',a\}, \{x_5',u_3\}$ give the desired $C_5$-minor in $(H,x_1'x_2'x_3'x_4'x_5')$. If $x_4'\in N_H(u_3)$ then $\{x_1',u_1\}$, $\{x_2'\}$, $\{x_3',u_2\}$, $\{x_4',u_3\}$, $\{x_5',a\}$ give the desired $C_5$-minor in  $(H,x_1'x_2'x_3'x_4'x_5')$. So we may assume $x_4',x_5'\notin N_H(u_3)$. 
        
	    Suppose  $u_2\notin N_H(u_1)$. Then let $u_4\in U\setminus \{u_1,u_2,u_3\}$ and we know $u_4\in N_H(u_1)\cap N_H(u_2)$ and we may assume $x_4',x_5'\notin N_H(u_4)$. 
	    Note that $u_3u_4\notin E(H)$; since otherwise $\{u_2,u_3,u_4\}$ and one of the vertices in $\{x_2',x_3'\}$ induce a $K_4$ in $H-a$, contradicting Lemma~\ref{3.0}.        Hence, $N_H(u_3)\cap X' = N_H(u_4)\cap X'=\{x_1',x_2',x_3'\}$, which means $\{x_1'\}, \{x_2',u_3\}$, $\{x_3',u_4\}, \{x_4',u_1\}, \{x_5',a\}$ give the desired $C_5$-minor $(H,x_1'x_2'x_3'x_4'x_5')$.  
        
        Hence, we may assume that $u_2\in N_H(u_1)$. Then $U$ induces a triangle $u_1u_2u_3u_1$ in $H$. In particular, we also have $|N_H(u_3)\cap X'|=3$; so $x_1',x_2',x_3'\in N_H(u_3)$. But then $(H,x_1'x_2'x_3'x_4'x_5')$ has the desired $C_5$-minor $\{x_1'\}, \{x_2',u_3\}$, $\{x_3',u_2\}, \{x_4',u_1\}, \{x_5',a\}$. 
    \end{proof}

        We can now conclude the proof of Theorem~\ref{main}. Let $t=|X|\le 5$. Since $(G,X)$ is not cycle-linked, we may let $x_1,\ldots,x_{t}$ be an ordering of the vertices in $X$, such that $(G,x_1\ldots x_{t})$ has no $C_{t}$-minor. 
        
        First, suppose $G$ has $t$ disjoint paths $P_i$, $i\in [t]$, from $x_i$ to $V(H)$ and internally disjoint from $H$. For each $i\in [t]$, let $x_i'\in V(H)$ be the end of $P_i$. Since $N_G(a)\subseteq V(H)$, we see that $a\notin V(P_i)$ for $i\in [t]$. By Lemma~\ref{3.1} and  Lemma~\ref{3.2}, $(H,x_1'\ldots x_t')$ has a $C_t$-minor which, combined with $P_i$, $i\in [t]$, gives a $C_t$-minor in $(G,x_1\ldots x_t)$, a contradiction.

        Therefore, we may assume that such $t$ paths do not exist. Then by Menger's theorem, there is a separation $(A,B)$ of $(G,X)$ such that $V(H)\subseteq B$ and $|A\cap B|<t\le 5$. Choose $(A,B)$ to minimize the order $|A\cap B|$. Then by Menger's theorem again, $G[B]$ has $|A\cap B|$ disjoint paths from $A\cap B$ to $H$ and internally disjoint from $H$. Let $X'$ be the end of those paths in $H$. Then $|X'|\le 4$ and, by Lemma~\ref{3.1},  $(H,X')$ is cycle-linked. Hence, $(G[B], A\cap B)$ is also cycle-linked. This implies that $(A,B)$ is a rigid separation of $(G,X)$ of order at most 4, contradicting Lemma~\ref{2.1}.
        \qed
        
	\newpage
    
	\bibliographystyle{abbrv}
	\bibliography{reference}

\begin{thebibliography}{10}

\bibitem{klink-BT}
B.~Bollob\'{a}s and A.~Thomason.
\newblock Highly linked graphs.
\newblock {\em Combinatorica}, 16:313–320, 1996.

\bibitem{K4}
R.~Fabila-Monroy and D.~Wood.
\newblock Rooted $k_4$-minors.
\newblock {\em The Electronic Journal of Combinatorics [electronic only]}, 20, 01 2013.

\bibitem{klink-existJung}
H.~A. Jung.
\newblock Verallgemeinerung des n-fachen zusammenhangs fuer graphen.
\newblock {\em Math. Ann.}, 187:95--103, 1970.

\bibitem{klink-KKY}
K.-I. Kawarabayashi, A.~Kostochka, and G.~Yu.
\newblock On sufficient degree conditions for a graph to be $k$-linked.
\newblock {\em Comb. Probab. Comput.}, 15(5):685–694, Sept. 2006.

\bibitem{klink-K}
A.~Kostochka.
\newblock Lower bound of the hadwiger number of graphs by their average degree.
\newblock {\em Combinatorica}, 4:307–316, 1984.

\bibitem{klink-existLM}
D.~G. Larman and P.~Mani.
\newblock On the existence of certain configurations within graphs and the 1-skeletons of polytopes.
\newblock {\em Proceedings of the London Mathematical Society}, s3-20(1):144--160, 1970.

\bibitem{klink-RS}
N.~Robertson and P.~Seymour.
\newblock Graph minors .xiii. the disjoint paths problem.
\newblock {\em Journal of Combinatorial Theory, Series B}, 63(1):65--110, 1995.

\bibitem{HadwigerK6}
N.~Robertson, P.~D. Seymour, and R.~Thomas.
\newblock Hadwiger's conjecture for $k_6$-free graphs.
\newblock {\em Combinatorica}, 13:279--361, 1993.

\bibitem{Se16}
P.~Seymour.
\newblock Hadwiger’s conjecture.
\newblock {\em In Open problems in mathematics, Springer}, pages 417--437, 2016.

\bibitem{klink-TW}
R.~Thomas and P.~Wollan.
\newblock An improved linear edge bound for graph linkages.
\newblock {\em European Journal of Combinatorics}, 26(3):309--324, 2005.

\bibitem{3link-TW}
R.~Thomas and P.~Wollan.
\newblock The extremal function for 3-linked graphs.
\newblock {\em Journal of Combinatorial Theory, Series B}, 98(5):939--971, 2008.

\bibitem{klink-T}
A.~Thomason.
\newblock An extremal function for contractions of graphs.
\newblock {\em Mathematical Proceedings of the Cambridge Philosophical Society}, 95(2):261--265, 1984.

\bibitem{H-bound}
P.~Wollan.
\newblock Extremal functions for rooted minors.
\newblock {\em Journal of Graph Theory}, 58(2):159–178, 2008.

\end{thebibliography}
\end{document}